\newtheorem{theorem}{Theorem}[section]
\newtheorem{lemma}[theorem]{Lemma}
\theoremstyle{definition}
\newtheorem{definition}[theorem]{Definition}
\newtheorem*{remark}{Remark}
\newcommand{\ie}{i.\,e.,~}
\newcommand{\eg}{e.\,g.,~}
\newcommand{\apos}[1]{\grqq{}{#1}"}
\renewcommand{\|}{\,|\,}
\renewcommand{\vec}[1]{\bm{#1}}
\renewcommand{\u}{{\vec{u}}}
\renewcommand{\v}{{\vec{v}}}
\newcommand{\w}{{\vec{w}}}
\newcommand{\x}{{\vec{x}}}
\newcommand{\y}{{\vec{y}}}
\newcommand{\z}{{\vec{z}}}
\newcommand{\V}{{\vec{V}}}
\newcommand{\X}{{\vec{X}}}
\newcommand{\Y}{{\vec{Y}}}
\newcommand{\Z}{{\vec{Z}}}
\renewcommand{\d}[1]{\,\textnormal{d}{#1}}
\newcommand{\dx}{\,\textnormal{d}\vec{x}}
\newcommand{\dy}{\,\textnormal{d}\vec{y}}
\newcommand{\dz}{\,\textnormal{d}\vec{z}}
\newcommand{\Xset}{\mathcal{X}}
\newcommand{\Yset}{\mathcal{Y}}
\newcommand{\Zset}{\mathcal{Z}}
\newcommand{\tr}{^\top}
\newcommand{\trace}[1]{\textnormal{trace}\left({#1}\right)}
\newcommand{\defas}{\coloneqq}
\newcommand{\asdef}{\eqqcolon}
\newcommand{\norm}[1]{\lVert{#1}\rVert}
\newcommand{\abs}[1]{\lvert{#1}\rvert}
\newcommand{\set}[2]{\lbrace {#1}\,|\,{#2} \rbrace}
\newcommand{\grad}{\nabla}
\newcommand{\eps}{\varepsilon}
\newcommand{\prob}{{\mathbf{P}}}
\newcommand{\expct}{{\mathbf{E}}}
\newcommand{\expctcond}[2]{\expct[{#1}\|{#2}]}
\newcommand{\var}{{\mathbf{V}\textnormal{ar}}}
\newcommand{\varcond}[2]{\var({#1}\|{#2})}
\newcommand{\ind}{\mathbf{1}}
\newcommand{\N}{\mathbf{N}}
\newcommand{\R}{\mathbf{R}}
\newcommand{\xyzW}[2]{\llbracket{#1}\rrbracket_{#2}}
\newcommand{\xyz}[1]{\xyzW{#1}{}}
\newcommand{\px}{\rho_\X}
\newcommand{\pyz}{\rho_{\Y,\Z}}
\newcommand{\py}{\rho_\Y}
\newcommand{\pzy}{\rho_{\Z|\Y}}
\newcommand{\expctpzy}[1]{\expctcond{#1}{\Y}}
\newcommand{\inter}[1]{#1^\circ}
\newcommand{\wrt}{{w.r.t.}}
\newcommand{\dom}[1]{\textnormal{dom}({#1})}
\newcommand{\Cpoinc}{{C_\textnormal{P}}}
\newcommand{\diam}[1]{\textnormal{diam}({#1})}
\newcommand{\ytl}{{\check{\y}}}
\DeclareMathOperator{\esssup}{ess\,sup}
\newcommand{\CpoincW}{{C_\textnormal{P,$W$}}}
\newcommand{\Cpoinceps}{{C_\textnormal{P,$\eps$}}}
\newcommand{\CpoincepsW}{{C_\textnormal{P,$\eps$,$W$}}}
\newcommand{\Cvar}{{C_{\textnormal{$\var$}}}}
\newcommand{\CvarW}{{C_{\textnormal{$\var$,$W$}}}}
\newcommand{\wm}{\textit{Wolfram Mathematica}}
\title{Generalized bounds for active subspaces\thanks{{\bfseries\sffamily Funding:} MTP acknowledges support from the \textit{Deutsche Forschungsgemeinschaft} (DFG) through \textit{TUM International Graduate School for Science and Engineering} (IGSSE), GSC~81.
		Additional financial support for BW was provided by DFG through WO~671/11-1.
		JW was supported in part by the Swedish Research Council under grant No. 2018-01726.}}
\author{Mario Teixeira Parente\footnote{Chair for Numerical Mathematics, Technical University of Munich, Boltzmannstraße 3, 85748 Garching bei München, Germany (\texttt{\{parente,wohlmuth\}@ma.tum.de})} \and Jonas Wallin\footnote{Department of Statistics, Lund University, Lund, Sweden (\texttt{jonas.wallin@stat.lu.se})} \and Barbara Wohlmuth$^\dagger$}
\date{\today}
\numberwithin{equation}{section}
\begin{document}
\maketitle

\begin{abstract}
In this article, we consider scenarios in which traditional estimates for the active subspace method based on probabilistic Poincaré inequalities are not valid due to unbounded Poincaré constants.
Consequently, we propose a framework that allows to derive generalized estimates in the sense that it enables to control the trade-off between the size of the Poincaré constant and a weaker order of the final error bound.
In particular, we investigate independently exponentially distributed random variables in dimension two or larger and give explicit expressions for corresponding Poincaré constants showing their dependence on the dimension of the problem.
Finally, we suggest possibilities for future work that aim for extending the class of distributions applicable to the active subspace method as we regard this as an opportunity to enlarge its usability.

\end{abstract}
{\bfseries\sffamily Keywords}: Dimension reduction $\cdot$ Active subspaces $\cdot$ Poincaré inequalities

\section{Introduction}
\label{sec:intro}
Many modern computational problems, having a large number of input variables or parameters, suffer from the \apos{curse of dimensionality} in that their solution becomes computationally expensive or even intractable as the dimension of the problem grows.
The \textit{active subspace method} (ASM), or shorter, \textit{active subspaces}~\cite{constantine2015active,constantine2014active}, is a set of tools for dimension reduction which reduce the effects caused by the curse of dimensionality.
ASM splits an Euclidean input space into a so-called active and inactive subspace based on average sensitivities of a real-valued function of interest.
The sensitivities are found by an eigendecomposition of weighted outer products of the function's gradient with itself.
That is, eigenvalues indicate average sensitivities of a function of interest in the direction of the corresponding eigenvector.
Eigenvectors and eigenvalues belonging to the active subspace are then considered as dominant for the global behavior of the function of interest, whereas the inactive subspace is regarded as negligible.

The usefulness of ASM has already been demonstrated for several real case studies in various applied disciplines; see, \eg~\cite{diaz2018modified,leon2017identifiability,teixeiraparente2019efficient,teixeiraparente2019bayesian,tezzele2018combined}.
It has also motivated other methodological advances, \eg in the solution of Bayesian inverse problems~\cite{stuart2010inverse} by an accelerated Markov chain Monte Carlo algorithm~\cite{constantine2016accelerating}, in uncertainty quantification and propagation~\cite{coleman2019gradient,tripathy2016gaussian}, and in the theory of ridge approximation; see, \eg~\cite{constantine2017near,glaws2018lanczos,hokanson2018data}.

However, ASM is only one dimension reduction technique among others.
For example, likelihood-informed dimension reduction for the solution of Bayesian inverse problems~\cite{cui2014likelihood} is based on a similar idea.
This approach, however, analyzes the Hessian matrix of the function of interest instead of the gradient.
An extension to vector-valued functions in gradient-based dimension reduction is given by~\cite{zahm2018gradient}.
Dimension reduction for nonlinear Bayesian inverse problems based on the Kullback-Leibler (KL) divergence of approximate posteriors and (subspace) logarithmic Sobolev inequalities, including a comprehensive comparison of several other techniques, was provided by the authors of~\cite{zahm2018certified}.
Furthermore, Active Manifolds~\cite{bridges2019active}, as a nonlinear analogue to ASM, and PTU~\cite{budninskiy2019parallel}, as an extension to a framework for nonlinear dimension reduction called Isomap~\cite{tenenbaum2000global}, both have demonstrated a lot of promise.

A main result in ASM theory is an upper bound on the mean squared error between the original function of interest and its low-dimensional approximation on the active subspace.
The corresponding proof is based on an inequality of Poincaré type which is probabilistic in nature since ASM involves a probability distribution that weights sensitivities of the function of interest at different locations in the input space.
The upper bound consists of the product of a Poincaré type constant and the sum of eigenvalues corresponding to the inactive subspace, called inactive trace in the following.
The constant derived in~\cite{constantine2014active} is claimed to depend only on the original distribution which is generally incorrect.
Also, to the knowledge of the authors, existing theory for dimension reduction techniques based on Poincaré or logarithmic Sobolev inequalities are subject to quite restrictive assumptions on the involved probability distribution.
These assumptions comprise either the distribution having compact support or its density $\rho$ being of uniformly log-concave form, \ie $\rho(\x)=\exp(-V(\x))$, where $V$ is such that its Hessian matrix $V''(\x)\succeq\alpha I$ for each~$\x$ and some~$\alpha>0$.
By the famous Bakry-Émery criterion, the latter assumption implies a logarithmic Sobolev inequality and Poincaré inequality with universal Poincaré constant~$1/\alpha$; see, \eg~\cite{bakry2013analysis,handel2016probability}.
Note that the case $\alpha=0$, \ie $V$ being only convex, is not covered.
However, Bobkov~\cite{bobkov1999isoperimetric} showed that a Poincaré inequality is still satisfied in this case and gave lower and upper bounds on the corresponding Poincaré constant.
Distributions with heavier tails, \ie for~$\alpha=0$, as, \eg exponential or Laplace distributions, do not satisfy the assumptions above, but are, however, of practical  relevance.

In ASM theory, it is not the original distribution that must satisfy a Poincaré inequality, but a conditional distribution on the inactive subspace, which depends on a variable defined on the active subspace.
Both assumptions on the original distribution from above are in fact passed on to the conditional distribution.
However, the case $\alpha=0$ is cumbersome.
We shall give an example for this case regarding a distribution that itself satisfies a Poincaré inequality, but might not be applicable at all or only with care due to an arbitrary large Poincaré constant in the final bound for the mentioned mean squared error.
Our arguments are based on the bounds for corresponding Poincaré constants given by Bobkov in~\cite{bobkov1999isoperimetric}.
We also describe a way to still get upper bounds in this situation, however with a weaker, reduced order in the inactive trace.
This order reduction is controllable in the sense that the practitioner can decide for the actual trade-off between the order of the inactive trace and the size of the corresponding Poincaré constant.
The mentioned general problem and its solution is exemplified on independently exponentially distributed random variables in dimension two and larger.
Also, it is shown that the final constant is very much depending on the dimension of the problem.
However, since this example is rather special, we eventually propose opportunities for future work that aim for extending the class of distributions for which the bounds and the involved constants are explicitly available in order to expand the applicability of ASM to more scenarios of practical interest.
In particular, the class of multivariate generalized hyperbolic distributions is a rich class that is, in our opinion, worthwhile to get investigated.
Details on arising difficulties with this class are also provided.

The outline of the manuscript is as follows.
Section~\ref{sec:act_subsp} gives an introduction to ASM and its formal context.
In Section~\ref{sec:comp_norm_dists}, we recall results involving compactly supported and normal distributions.
The main results consisting of a motivation and discussion of the mentioned problems, with independently exponentially distributed random variables as an extreme example, are presented in Section~\ref{sec:main_results}.
In Section~\ref{sec:futu_work}, we propose possibilities for future work.
Finally, a summary is given in Section~\ref{sec:summ}.

\section{Active subspaces}
\label{sec:act_subsp}
The active subspace method is a set of tools for gradient-based dimension reduction~\cite{constantine2015active,constantine2014active}.
Its aim is to find directions in the domain of a function $f$ along which the function changes dominantly, on average.
For illustration, consider a function of the form $f(\x) = g(A\tr\x)$ with a so-called profile function $g$ and a matrix $A\in\R^{n\times k}$, $1\leq k\leq n$, $n\geq2$.
Functions of this type are called \textit{ridge functions}~\cite{pinkus2015ridge}.
Note that $f$ is constant along the null space of $A\tr$.
Indeed, for $\x\in\dom{f}\subseteq\R^n$ and $\v\in\mathcal{N}(A\tr)$ such that $\x+\v\in\dom{f}$, it holds that
\begin{equation}
	f(\x+\v) = g(A\tr(\x+\v)) = g(A\tr\x) = f(\x).
\end{equation}
That is, $f$ is intrinsically at most $k$-dimensional.
For arbitrary~$f$, the general task is to find a suitable dimension~$k$, a function $g:\dom{g}\to\R$, $\dom{g}\subseteq\R^k$, and a matrix $A\in\R^{n\times k}$ such that $f(\x)\approx g(A\tr\x)$.

For this, the active subspace method assumes that the function of interest $f:\Xset\to\R$ is continuously differentiable with partial derivatives that are square-integrable \wrt{} a probability density function~$\px$.
We define~$\Xset\defas\dom{f}\subseteq\R^n$ to be the support of~$\px$, \ie the closure of the set $\Xset^+\defas\set{\x\in\R^n}{\px(\x)>0}$.
We assume that $\Xset$ is a continuity set, that is, its boundary is a Lebesgue null set.
The central object of interest is a matrix constructed by outer products of the gradient of~$f$, $\grad f=\grad^\x f$, with itself weighted by~$\px$,
\begin{align}
	\label{eq:C}
	C &\defas \int_{\R^n}{\grad f(\x) \, \grad f(\x)\tr \px(\x) \dx}.
\end{align}
Since $C$ is real symmetric, there exists an eigendecomposition $C=W\Lambda W\tr$ with an orthogonal matrix~$W\in\R^{n\times n}$ and a diagonal matrix~$\Lambda\in\R^{n\times n}$ with descending eigenvalues $\lambda_1,\ldots,\lambda_n$ on its diagonal.
The positive semidefiniteness of $C$ additionally ensures that $\lambda_1\geq\cdots\geq\lambda_n\geq0$.
Note that the matrices~$C$, $W$, and~$\Lambda$ all depend on~$f$ and~$\px$.

The behavior of the function $f$ and the eigendecomposition of $C$ have an interesting, exploitable relation, \ie
\begin{equation}
	\lambda_i = \w_i\tr C\w_i = \int_{\R^n}{(\w_i\tr\grad f(\x))^2\px(\x)\dx}, \quad i=1,\ldots,n.
\end{equation}
If, for example, $\lambda_i=0$ for some $i$, then we can conclude that $f$ does not change in the direction of the corresponding eigenvector $\w_i$.
That is, if eigenvalues $\lambda_i$, $i=k+1,\ldots,n$, are sufficiently small for a suitable~$k\leq n-1$, or even zero as in the case of ridge functions, then $f$ can be approximated by a lower-dimensional function.
Formally, this corresponds to a split of $\Lambda$ and $W$, \ie
\begin{equation}
	\Lambda  = \begin{pmatrix}\Lambda_1 & \\ & \Lambda_2\end{pmatrix} \quad\text{and}\quad W = \begin{pmatrix}W_1 & W_2\end{pmatrix},
\end{equation}
where $\Lambda_1\in\R^{k\times k}$, $\Lambda_2\in\R^{n-k\times n-k}$ and $W_1\in\R^{n\times k}$, $W_2\in\R^{n\times n-k}$.

Since
\begin{align}
	\x = WW\tr\x = W_1W_1\tr\x+W_2W_2\tr\x = W_1\y+W_2\z,
\end{align}
the split of $W$ suggests a new coordinate system $(\y,\z)$ for the \textit{active variable} $\y\defas W_1\tr\x\in\R^k$ and the \textit{inactive variable} $\z\defas W_2\tr\x\in\R^{n-k}$.
The range of $W_1$, $\mathcal{R}(W_1)\defas\set{W_1\y}{\y\in\R^k}\subseteq\R^n$, is called the \textit{active subspace of $f$}.
Note that the new variable $\y$ is aligned to directions on which $f$ changes much more, on average, than on directions the variable $\z$ is aligned to.

For the remainder, we define
\begin{equation}
	\label{eq:Yset_Zset}
	\Yset\defas W_1\tr\Xset=\set{W_1\tr\x}{\x\in\Xset} \quad\text{and}\quad \Zset\defas W_2\tr\Xset=\set{W_2\tr\x}{\x\in\Xset}.
\end{equation}
Also, for $\y\in\Yset$ and $\z\in\Zset$, let
\begin{equation}
	\xyz{\y,\z} \defas \xyzW{\y,\z}{W} \defas W_1\y+W_2\z
\end{equation}
to concisely denote changes of the coordinate system.

Variables $\x$, $\y$, and $\z$ can also be regarded as random variables $\X$, $\Y$, and $\Z$, respectively, that are defined on a common probability space~$(\Omega,\mathcal{F},\prob)$.
The orthogonal variable transformation $\x\mapsto(\y,\z)$ induces probability density functions for the random variables~$\Y$ and~$\Z$.
That is, the joint distribution of~$(\Y,\Z)$ is
\begin{equation}
	\pyz(\y,\z)=\px(\xyz{\y,\z})
\end{equation}
for~$\y\in\Yset$ and~$\z\in\Zset$.
Corresponding marginal and conditional densities are defined as usual.
Additionally, set
\begin{equation}
	\label{eq:Yset_pl}
	\Yset^+\defas\set{\y\in\R^k}{\py(\y)>0}
\end{equation}
to denote the set of all values for the active variable $\y$ with a strictly positive density value.
We frequently use that for a $\px$-integrable function $h:\Xset\to\R$, it holds that
\begin{equation}
	\expct[h(\X)] = \expct[\expctpzy{h(\xyz{\Y,\Z})}].
\end{equation}

Given the eigenvectors in $W$, we still need to define a lower-dimensional function~$g$ approximating~$f$.
For $\y\in\Yset^+$, a natural way is to define~$g(\y)$ as the conditional expectation of~$f$ given~$\y$, \ie as an integral over the inactive subspace weighted with the conditional density~$\pzy(\cdot|\y)$.
Recall that this approximation is the best in an $L^2$ sense \cite[Corollary~8.17]{klenke2013probability}.
Hence, we set
\begin{equation}
	\label{eq:g}
	\begin{aligned}
		g(\y) \defas{}& \expct[f(\xyz{\Y,\Z})\|\Y=\y] \\
		={}& \int_{\R^{n-k}}{f(\xyz{\y,\z})\,\pzy(\z|\y)\dz}.
	\end{aligned}
\end{equation}
Additionally, we define
\begin{equation}
	f_g(\x)\defas g(W_1\tr\x)
\end{equation}
for $\x\in\inter{\Xset}$, where $\inter{\Xset}$ denotes the interior of $\Xset$.
Note that $W_1\tr\x\in\inter{\Yset}\subseteq\Yset^+$ for $\x\in\inter{\Xset}$.

\begin{remark}
	In practice, both the matrix~$C$ from~\eqref{eq:C} and the low-dimensional function~$g$ from~\eqref{eq:g} are often not exactly available.
	Our results can, however, also adapted to a corresponding perturbation analysis, provided in~\cite{constantine2014active}, which we do not perform since it would require additional notation and complexity but not contribute to the central aspects of this manuscript.
\end{remark}

One of the main results in ASM theory is a theorem that gives an upper bound on the mean squared error of~$f_g$ approximating~$f$.
The upper bound is the product of a Poincaré constant~$\CpoincW>0$ and the sum of~$n-k$ eigenvalues corresponding to the inactive subspace, called inactive trace.
That is, if the inactive trace is small, then the mean squared error of~$f_g$ approximating~$f$ is also small.
Mathematically, for a given probability density function $\px$, the theorem states that \cite[Theorem~3.1]{constantine2014active}
\begin{equation}
	\label{eq:act_subsp_ineq_poincW}
	\expct[(f(\X)-f_g(\X))^2] \leq \CpoincW (\lambda_{k+1}+\cdots+\lambda_n)
\end{equation}
for a Poincaré constant $\CpoincW=\CpoincW(W,\px)>0$.
Note that~$\CpoincW$ depends on~$W=W(f)$ and thus also indirectly on~$f$.
If desired, we could remove this dependence by considering the supremum of~$\CpoincW$ over all orthogonal matrices, \ie
\begin{equation}
	\label{eq:C_poinc}
	\Cpoinc \defas \sup_{W \text{orth.}}\CpoincW,
\end{equation}
and get
\begin{equation}
	\label{eq:act_subsp_ineq_poinc}
	\expct[(f(\X)-f_g(\X))^2] \leq \Cpoinc (\lambda_{k+1}+\cdots+\lambda_n),
\end{equation}
provided the constant~$\Cpoinc=\Cpoinc(\px)$ exists.
Deriving such an upper bound for a certain class of distributions would allow to choose~$\px$ independently of~$f$.
Note that~\cite{zahm2018gradient,zahm2018certified} also control the Poincaré constant for any orthogonal matrix~$W$.

The derivation of~\eqref{eq:act_subsp_ineq_poincW} starts with
\begin{align}
	\expct[(f(\X)-f_g(\X))^2] &= \expct[\expctpzy{(f(\xyz{\Y,\Z})-g(\Y))^2}] \label{eq:mse_f_fg_fX_fYZ} \\
	&\leq \expct[C_\Y \, \expctpzy{\norm{\grad^\z f(\xyz{\Y,\Z})}_2^2}], \label{eq:poinc_pzy}
\end{align}
where we used a probabilistic Poincaré inequality \wrt{} $\pzy(\cdot|\y)$ for a given $\y\in\Yset^+$.
Note that the Poincaré constant~$C_\y$ of $\pzy(\cdot|\y)$ depends on~$\y$.
In~\cite[Theorem~3.1]{constantine2014active}, it was indirectly assumed that this constant does not depend on~$\y$.
Under the assumption that $\CpoincW\defas\esssup{C_\Y}<\infty$, \ie the distribution of~$C_\Y$ has compact support, we can continue with
\begin{equation}
	\label{eq:supCy_exists}
	\expct[(f(\X)-f_g(\X))^2] \leq \CpoincW \, \expct[\expctpzy{\norm{\grad^\z f(\xyz{\Y,\Z})}_2^2}].
\end{equation}
However, as we see in Subsection~\ref{ssec:exp_dist_case}, this assumption on~$C_\Y$ is not always fulfilled.

The continuation of~\eqref{eq:supCy_exists} follows~\cite[Lemma~2.2 and Theorem~3.1]{constantine2014active}.
We repeat the steps here for the sake of completeness.
So, first, note that $\grad^\z f(\xyz{\y,\z}) = W_2\tr\grad^\x f(\xyz{\y,\z})$ for $\y\in\Yset$ and~$\z\in\Zset$.
Then, we write
\begin{align}
	\label{eq:grad_eigvals}
	\begin{split}
		&\expct[\expctpzy{\norm{\grad^\z f(\xyz{\Y,\Z})}_2^2}] \\
		&\qquad= \trace{\expct[\expctpzy{\grad^\z f(\xyz{\Y,\Z})\,\grad^\z f(\xyz{\Y,\Z})\tr}]} \\
		&\qquad= \trace{W_2\tr\,\expct[\expctpzy{\grad^\x f(\xyz{\Y,\Z})\,\grad^\x f(\xyz{\Y,\Z})\tr}]\,W_2} \\
		&\qquad= \trace{W_2\tr\,\expct[\grad^\x f(\X)\,\grad^\x f(\X)\tr]\,W_2} \\
		&\qquad= \trace{W_2\tr CW_2} = \trace{W_2\tr W\Lambda W\tr W_2} \\
		&\qquad= \trace{\Lambda_2} = \lambda_{k+1}+\cdots+\lambda_n.
	\end{split}
\end{align}

The next section gives two examples for types of densities $\px$ that are well-known to imply a probabilistic Poincaré inequality for $\pzy(\cdot|\y)$ and allow for a bound on its constant~$C_\y$ that is uniform in~$\y$ and~$W$.
Again, we emphasize that it is not~$\px$ that should satisfy a probabilistic Poincaré inequality but~$\pzy(\cdot|\y)$.

\section{Compactly supported and normal distributions}
\label{sec:comp_norm_dists}
The uniform distribution, as a canonical example of a distribution with compact support~$\Xset$, is well-known to satisfy a probabilistic Poincaré inequality on its own and to imply the same for densities~$\pzy(\cdot|\y)$ which are also uniform.
Note that a probabilistic Poincaré inequality involving a uniform distribution is actually equivalent to a regular Poincaré inequality \wrt{} the Lebesgue measure.
The following theorem is a slightly more general result.
We add a convexity assumption on $\inter{\Xset}$ since it makes Poincaré constants explicit.
Recall that the Poincaré constant for a convex domain with diameter~$d>0$ is~$d/\pi$; see, \eg~\cite{bebendorf2003note}.
\begin{theorem}
	\label{thm:mse_f_fg}
	Assume that $\inter{\Xset}$ is a bounded and convex domain.
	If $0<\delta\leq\px(\x)\leq D<\infty$ for all $\x\in\inter{\Xset}$, then
	\begin{equation}
		\label{eq:comp_dist_mse}
		\expct[(f(\X)-f_g(\X))^2] \leq \Cpoinc (\lambda_{k+1}+\cdots+\lambda_n)
	\end{equation}
	with
	\begin{equation}
		\Cpoinc = \Cpoinc(\delta,D,\Xset) \defas \frac{\diam{\Xset}}{\pi} \cdot \frac{D}{\delta} > 0.
	\end{equation}
\end{theorem}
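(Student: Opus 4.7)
The plan is to combine the derivation from \eqref{eq:mse_f_fg_fX_fYZ}--\eqref{eq:grad_eigvals} with a uniform-in-$\y$ bound on the conditional Poincaré constant $C_\y$ of $\pzy(\cdot|\y)$ appearing in \eqref{eq:poinc_pzy}. Once we establish $\esssup C_\Y \leq \Cpoinc$, estimate \eqref{eq:supCy_exists} combined with the trace identity \eqref{eq:grad_eigvals} yields the claim directly, so the entire proof reduces to producing such a uniform bound for each $\y \in \Yset^+$.

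First, fix $\y \in \Yset^+$ and consider the slice
\begin{equation*}
\Zset_\y \defas \set{\z \in \R^{n-k}}{\xyz{\y,\z} \in \inter{\Xset}}.
\end{equation*}
Since $\z \mapsto W_1\y + W_2\z$ is an affine isometry and $\inter{\Xset}$ is convex and bounded, $\Zset_\y$ is a bounded convex open subset of $\R^{n-k}$ with $\diam{\Zset_\y} \leq \diam{\Xset}$. On such a domain, the Payne--Weinberger inequality as formulated in~\cite{bebendorf2003note} supplies a Poincaré inequality with respect to Lebesgue measure whose constant is controlled by $\diam{\Zset_\y}/\pi \leq \diam{\Xset}/\pi$.

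Second, I would transfer this Lebesgue inequality to a weighted one for $\pzy(\cdot|\y) = \px(\xyz{\y,\cdot})/\py(\y)$. The hypothesis $\delta \leq \px \leq D$ on $\inter{\Xset}$ gives the pointwise bounds $\delta/\py(\y) \leq \pzy(\z|\y) \leq D/\py(\y)$ on $\Zset_\y$. A standard two-sided density comparison then yields the uniform estimate: first bound the conditional variance above by $D/\py(\y)$ times the unweighted $L^2$-deviation from the Lebesgue mean (using variance minimization at the conditional mean together with the upper density bound), then apply Payne--Weinberger on the Lebesgue side, and finally bound the Lebesgue Dirichlet energy above by $\py(\y)/\delta$ times its weighted counterpart. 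The factors $\py(\y)$ cancel, and the residual constant depends only on $\diam{\Xset}$ and $D/\delta$, independently of $\y$, which is exactly $\Cpoinc$.

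With $\esssup C_\Y \leq \Cpoinc$ in hand, inserting this bound into \eqref{eq:supCy_exists} and applying the trace identity \eqref{eq:grad_eigvals} completes the proof. The main obstacle is really just the geometric step of ensuring that the slice $\Zset_\y$ is convex, open, bounded, and of controlled diameter so that Payne--Weinberger actually applies; this is immediate here because the active/inactive coordinate split is an orthogonal affine transformation that preserves all the required structure of $\inter{\Xset}$. A minor caveat remains for $\y$ near $\boundary\Yset$, where $\Zset_\y$ may degenerate in dimension, but such $\y$ form a $\py$-null set and are absorbed into the outer expectation without affecting the final constant.
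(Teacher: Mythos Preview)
Your proposal is correct and follows essentially the same route as the paper's proof: define the slice $\Zset_\y$, note its convexity and diameter bound from the orthogonal coordinate change, use the two-sided density bounds $\delta/\py(\y)\le\pzy(\cdot|\y)\le D/\py(\y)$ together with Payne--Weinberger on the Lebesgue side, and then conclude via \eqref{eq:supCy_exists} and \eqref{eq:grad_eigvals}. Your explicit insertion of the variance-minimization step (replacing the centering $g(\y)$ by the Lebesgue mean before invoking Payne--Weinberger) is a detail the paper's chain of inequalities leaves implicit, and your argument is otherwise identical in structure.
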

\begin{proof}
	Define
	\begin{equation}
		\inter{\Zset_\y} = \set{\z\in\R^{n-k}}{\xyz{\y,\z}\in\inter{\Xset}} \subseteq \Zset
	\end{equation}
	and note that it is convex for $\y\in\Yset^+$.
	It holds that $\diam{\inter{\Zset_\y}} \leq \diam{\Zset} \leq \diam{\Xset}$.
	Note that
	\begin{equation}
		\frac{\delta}{\py(\y)} \leq \pzy(\z|\y) \leq \frac{D}{\py(\y)}
	\end{equation}
	for $\y\in\Yset^+$ and $\z\in\inter{\Zset_\y}$.
	This justifies the following lines of computation for $\y\in\Yset^+$,
	\begin{align}
		&\expctpzy{(f(\xyz{\Y,\Z})-g(\Y))^2} \\
		&\qquad= \int_{\inter{\Zset_\y}}{(f(\xyz{\y,\z})-g(\y))^2 \, \pzy(\z|\y)\dz} \\
		&\qquad\leq \frac{D}{\py(\y)} \int_{\inter{\Zset_\y}}{(f(\xyz{\y,\z})-g(\y))^2} \dz \\
		&\qquad\leq \frac{\diam{\inter{\Zset_\y}}}{\pi} \frac{D}{\py(\y)} \int_{\inter{\Zset_\y}}{\norm{\grad^\z f(\xyz{\y,\z})}_2^2 \dz} \\
		&\qquad\leq \frac{\diam{\Xset}}{\pi} \frac{D}{\delta} \int_{\inter{\Zset_\y}}{\norm{\grad^\z f(\xyz{\y,\z})}_2^2 \, \pzy(\z|\y)\dz} \\
		&\qquad= \frac{\diam{\Xset}}{\pi} \frac{D}{\delta} \, \expctpzy{\norm{\grad^\z f(\xyz{\Y,\Z})}_2^2}. \label{eq:comp_dist_calc}
	\end{align}
	Then, combining~\eqref{eq:grad_eigvals} with~\eqref{eq:comp_dist_calc} yields the result in~\eqref{eq:comp_dist_mse}.
\end{proof}

Also, it is well-known that the Poincaré constant is one for the multivariate standard normal distribution~$\mathcal{N}(\vec{0},I)$~\cite{chen1982inequality}.
Since its density is rotationally symmetric, random variables~$\Y$ and~$\Z$ are independent and each follow again a standard normal distribution.
Hence, it holds that $C_\text{P}=1$.
For general multivariate normal distributions $\mathcal{N}(\vec{m},\Sigma)$ with mean~$\vec{m}$ and non-degenerate covariance matrix~$\Sigma$, shifting and scaling arguments give that $C_\text{P}=\lambda_{\textnormal{max}}(\Sigma)$.

\begin{remark}
	Note that the constant~$\Cpoinc$ in the previous two examples is independent of~$W$.
\end{remark}

\section{Main results}
\label{sec:main_results}
This section contains the main contribution of the manuscript which lies in an investigation of general log-concave probability measures \wrt{} their applicability for ASM.
Log-concave distributions have Lebesgue densities of the form $\px(\x) = \exp(-V(\x))$ for a convex function $V:\R^n\to(-\infty,+\infty]$.
Note that~$+\infty$ is included in the codomain of~$V$.
The conditional density $\pzy(\cdot|\y)$ for a given~$\y\in\Yset^+$ is then given by
\begin{equation}
	\pzy(\z|\y) = \frac{\exp(-V(\xyz{\y,\z}))}{\py(\y)} = \exp(-\tilde{V}_\y(\z)),
\end{equation}
where $\tilde{V}_\y(\z) \defas V(\xyz{\y,\z}) + \log(\rho_{\y}(\y))$.
Note that $\tilde{V}_\y$ inherits convexity (in $\z$) from $V$.
Bobkov~\cite{bobkov1999isoperimetric} shows that general log-concave densities satisfy a Poincaré inequality and gives lower and upper bounds on the corresponding Poincaré constant.

First, we discuss the special case of $\alpha$-uniformly convex functions~$V$ for which the corresponding density~$\px$ is known to satisfy a Poincaré inequality with universal Poincaré constant~$1/\alpha$.
However, the assumption of the density~$\px$ being of uniformly log-concave type is somewhat restrictive since it excludes distributions with heavier tails as, for example, exponential or Laplace distributions.
For this reason, we secondly investigate general log-concave densities and show that there might arise problems with this class of probability distributions due to arbitrary large Poincaré constants~$C_\Y$.
In particular, the problems and their proposed solution are exemplified on an extreme case example involving independently exponentially distributed random variables in~$n\geq2$ dimensions.

\subsection{$\alpha$-uniformly convex functions $V$}
\label{ssec:unif_conv}
\begin{definition}[$\alpha$-uniformly convex function]
	A function $V\in\mathcal{C}^2$ is said to be \textit{$\alpha$-uniformly convex}, if there is an $\alpha>0$ such that for all $\x\in\R^n$ it holds that
	\begin{equation}
		\u\tr V''(\x)\u \geq \alpha\norm{\u}_2^2
	\end{equation}
	for all $\u\in\R^n$, where $V''$ denotes the Hessian matrix of $V$.
\end{definition}

In \cite[p. 43--44]{handel2016probability}, it was shown that there is a dimension-free Poincaré constant~$1/\alpha$ for $\alpha$-uniformly log-concave~$\px$.
Note that this says nothing about the special case $\alpha=0$.
The existence of a dimension-free Poincaré constant for this special case is actually a consequence of the famous Kannan-Lovász-Simonovits conjecture; see, \eg~\cite{alonso2015approaching,lee2018kannan}.
However, since we need a Poincaré inequality for $\pzy(\cdot|\y)$, $\y\in\Yset^+$, we have to prove the following lemma similar to~\cite[Subsection~7.2]{zahm2018certified}
\begin{lemma}
	If $\px$ is $\alpha$-uniformly log-concave, then $\pzy(\cdot|\y)$ is $\alpha$-uniformly log-concave for each~$\y\in\Yset^+$.
\end{lemma}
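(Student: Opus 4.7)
The plan is to unwind the definition of $\tilde{V}_\y$ and use the chain rule together with the orthogonality of $W_2$ to transfer the uniform convexity bound from $V$ to $\tilde{V}_\y$.

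First, I would recall that by assumption $\px(\x) = \exp(-V(\x))$ with $V \in \mathcal{C}^2$ satisfying $V''(\x) \succeq \alpha I_n$ for all $\x \in \R^n$, and that the conditional density takes the form $\pzy(\z|\y) = \exp(-\tilde{V}_\y(\z))$ with $\tilde{V}_\y(\z) = V(\xyz{\y,\z}) + \log(\py(\y))$. The second summand is constant in $\z$, so it contributes nothing to the Hessian in $\z$; it suffices to analyze the $\z$-Hessian of $\z \mapsto V(W_1\y + W_2\z)$.

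Second, I would apply the chain rule. Since $\xyz{\y,\z} = W_1\y + W_2\z$ is affine in $\z$ with Jacobian $W_2$, one obtains
\begin{equation}
\tilde{V}_\y''(\z) = W_2\tr \, V''(\xyz{\y,\z}) \, W_2 \in \R^{(n-k)\times(n-k)}.
\end{equation}
Then for any $\u \in \R^{n-k}$, setting $\w \defas W_2 \u \in \R^n$, the uniform convexity of $V$ together with the fact that $W_2$ has orthonormal columns (as a block of the orthogonal matrix $W$) gives
\begin{equation}
\u\tr \tilde{V}_\y''(\z) \u = \w\tr V''(\xyz{\y,\z}) \w \geq \alpha \norm{\w}_2^2 = \alpha \norm{W_2\u}_2^2 = \alpha \norm{\u}_2^2.
\end{equation}

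Therefore $\tilde{V}_\y$ is $\alpha$-uniformly convex in $\z$ for every $\y \in \Yset^+$, which is exactly the statement that $\pzy(\cdot|\y)$ is $\alpha$-uniformly log-concave. There is no real obstacle here: the only thing to be slightly careful about is noting that the $\log \py(\y)$ term drops out upon differentiating in $\z$, and that $W_2\tr W_2 = I_{n-k}$ is what makes the norm preservation $\norm{W_2 \u}_2 = \norm{\u}_2$ hold so that the constant $\alpha$ is inherited without loss.
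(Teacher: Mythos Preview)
Your proof is correct and follows essentially the same route as the paper: compute the Hessian $\tilde{V}_\y''(\z) = W_2\tr V''(\xyz{\y,\z}) W_2$ via the chain rule, then use the $\alpha$-uniform convexity of $V$ together with $\norm{W_2\u}_2 = \norm{\u}_2$ to conclude. The only differences are cosmetic (variable names and your slightly more explicit justification that the $\log\py(\y)$ term vanishes and that $W_2$ has orthonormal columns).
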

\begin{proof}
	Let $\y\in\Yset^+$.
	Recall that $\pzy(\z|\y) = \exp(-\tilde{V}_\y(\z))$ for a convex function~$\tilde{V}_\y(\z) \defas V(\xyz{\y,\z}) + \log(\rho_{\y}(\y))$.
	The Hessian matrix~$\tilde{V}_\y''(\z)$ (\wrt~$\z$) computes to
	\begin{equation}
		\tilde{V}_\y''(\z) = W_2\tr\,V''(\xyz{\y,\z})\,W_2.
	\end{equation}
	Choose $\w\in\R^{n-k}$ arbitrarily.
	Then, for every $\z\in\R^{n-k}$, it holds that
	\begin{align}
		\w\tr \tilde{V}_\y''(\z)\w &= (W_2\w)\tr V''(\xyz{\y,\z})\,(W_2\w) \\
		&\geq \alpha\norm{W_2\w}_2^2 = \alpha\norm{\w}_2^2.
	\end{align}
\end{proof}
Since $\pzy(\cdot|\y)$ inherits the universal Poincaré constant~$1/\alpha$ from~$\px$, the result in~\eqref{eq:act_subsp_ineq_poinc} also holds for $\alpha$-uniformly log-concave densities with~$\Cpoinc=1/\alpha$ (independent of~$W$) which is similar to~\cite[Corollary~2]{zahm2018certified}.

For example, $\alpha$-uniformly log-concave densities comprise multivariate normal distributions~$\mathcal{N}(\vec{m},\Sigma)$ with mean~$\mathbf{m}$ and covariance matrix~$\Sigma$ ($\alpha=1/\lambda_{\textnormal{max}}(\Sigma)$).
However, distributions that satisfy the assumption only for $\alpha=0$ as, \eg Weibull distributions with the exponential distribution as a special case or Gamma distributions with shape parameter~$\beta\geq1$, only belong to the class of general log-concave distributions.

\subsection{General convex functions $V$}
\label{ssec:gen_convex_fcts}
Since we cannot make use of a universal dimension-free Poincaré constant involving general convex functions~$V:\R^n\to(-\infty,+\infty]$, we look at them more closely in this subsection.
Recall that $\pzy(\z|\y)=\exp(-\tilde{V}_\y(\z))$, $\y\in\Yset^+$, for a convex function $\tilde{V}_\y$.
We have to deal with the fact that the essential supremum of the random Poincaré constant~$C_\Y$ of $\pzy(\cdot|\Y)$ does possibly not exist.
A corresponding example is given in Subsection~\ref{sssec:main_results_exp2d}.
In the step from \eqref{eq:poinc_pzy} to \eqref{eq:supCy_exists}, we have applied Hölder's inequality with Hölder conjugates $(p,q)=(+\infty,1)$.
Since this is not possible for unbounded random variables~$C_\Y$, we can only show a weaker result.
\begin{lemma}
	\label{lem:CpoincepsW}
	If $\norm{\grad f}_2^2 \leq L$ for some constant~$L>0$, then
	\begin{equation}
		\expct[(f(\X)-f_g(\X))^2] \leq \CpoincepsW (\lambda_{k+1}+\cdots+\lambda_n)^{1/(1+\eps)},
	\end{equation}
	where
	\begin{equation}
		\label{eq:CpoincepsW}
		\CpoincepsW = \CpoincepsW (\eps,n,k,L,W,\px) \defas L^{\eps/(1+\eps)}\expct[C_\Y^{(1+\eps)/\eps}]^{\eps/(1+\eps)}.
	\end{equation}
\end{lemma}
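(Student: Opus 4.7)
The plan is to pick up where the derivation in the excerpt leaves off at \eqref{eq:poinc_pzy}, namely at the bound
\[
	\expct[(f(\X)-f_g(\X))^2] \leq \expct\bigl[C_\Y \, \expctpzy{\norm{\grad^\z f(\xyz{\Y,\Z})}_2^2}\bigr],
\]
and, since $C_\Y$ need not be essentially bounded, replace the $(+\infty,1)$-Hölder step that was used in \eqref{eq:supCy_exists} by a Hölder inequality with finite conjugates. Concretely, I would pair $C_\Y$ with the conditional expectation $G(\Y)\defas\expctpzy{\norm{\grad^\z f(\xyz{\Y,\Z})}_2^2}$ via Hölder with exponents $p=(1+\eps)/\eps$ and $q=1+\eps$, obtaining
\[
	\expct[C_\Y\, G(\Y)] \leq \expct[C_\Y^{(1+\eps)/\eps}]^{\eps/(1+\eps)} \, \expct[G(\Y)^{1+\eps}]^{1/(1+\eps)}.
\]
The first factor already contributes the $C_\Y$-moment appearing in \eqref{eq:CpoincepsW}; the remaining task is to bound the $L^{1+\eps}$-norm of $G(\Y)$ in terms of $L$ and the inactive trace.

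The key observation is the uniform gradient bound: because $W_2$ has orthonormal columns, $\grad^\z f(\xyz{\y,\z}) = W_2\tr\grad^\x f(\xyz{\y,\z})$ satisfies $\norm{\grad^\z f(\xyz{\y,\z})}_2^2 \leq \norm{\grad f}_2^2 \leq L$ pointwise. By Jensen applied to the conditional expectation, this passes to $0\leq G(\Y)\leq L$ almost surely. Consequently $G(\Y)^{1+\eps} \leq L^\eps\, G(\Y)$, so
\[
	\expct[G(\Y)^{1+\eps}]^{1/(1+\eps)} \leq L^{\eps/(1+\eps)} \, \expct[G(\Y)]^{1/(1+\eps)}.
\]
Finally, by the tower property and the computation \eqref{eq:grad_eigvals} already carried out in the excerpt, $\expct[G(\Y)] = \trace{\Lambda_2} = \lambda_{k+1}+\cdots+\lambda_n$, and assembling the three factors yields exactly the claimed bound with $\CpoincepsW$ as in \eqref{eq:CpoincepsW}.

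I do not expect a genuine technical obstacle here: everything reduces to choosing the right Hölder exponents so that the powers match \eqref{eq:CpoincepsW}, and then exploiting the uniform gradient bound to downgrade an $L^{1+\eps}$-norm to an $L^1$-norm. The only point that requires mild care is verifying $\norm{\grad^\z f}_2 \leq \norm{\grad^\x f}_2$ (immediate from orthonormality of the columns of $W_2$) and justifying the Jensen step for the conditional expectation so that the uniform bound $G(\Y)\leq L$ is legitimate. The conceptual content of the lemma is precisely the Hölder trade-off: making $\eps$ small drives the exponent on the inactive trace toward $1$ but blows up the required moment of $C_\Y$, and conversely, which is the controlled trade-off advertised in the introduction.
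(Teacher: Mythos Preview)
Your proposal is correct and follows essentially the same route as the paper: start from \eqref{eq:poinc_pzy}, apply H\"older with conjugates $((1+\eps)/\eps,\,1+\eps)$, use the pointwise bound $\norm{\grad^\z f}_2^2\leq L$ to reduce $\expct[G(\Y)^{1+\eps}]$ to $L^\eps\,\expct[G(\Y)]$, and finish with \eqref{eq:grad_eigvals}. One cosmetic remark: the passage from $\norm{\grad^\z f}_2^2\leq L$ to $G(\Y)\leq L$ is just monotonicity of conditional expectation, not Jensen.
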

\begin{proof}
	The boundedness of~$\grad f$ implies that also~$\norm{\grad^\z f}_2^2\leq L$.
	Choosing a weaker pair of conjugates~$(p,q)=((1+\eps)/\eps,1+\eps)$, $\eps>0$, we compute
	\begin{align}
		&\expct[C_\Y\,\expctpzy{\norm{\grad^\z f(\xyz{\Y,\Z})}_2^2}] \\
		&\qquad\leq \expct[C_\Y^p]^{1/p} \, \expct[\expctpzy{\norm{\grad^\z f(\xyz{\Y,\Z})}_2^2}^q]^{1/q} \label{eq:hoelder_weak} \\
		&\qquad= \expct[C_\Y^{(1+\eps)/\eps}]^{\eps/(1+\eps)} \, \expct[\expctpzy{\norm{\grad^\z f(\xyz{\Y,\Z})}_2^2}^{1+\eps}]^{1/(1+\eps)} \label{eq:hoelder_eps} \\
		&\qquad\leq L^{\eps/(1+\eps)} \, \expct[C_\Y^{(1+\eps)/\eps}]^{\eps/(1+\eps)} \, \expct[\expctpzy{\norm{\grad^\z f(\xyz{\Y,\Z})}_2^2}]^{1/(1+\eps)} \\
		&\qquad\leq L^{\eps/(1+\eps)} \, \expct[C_\Y^{(1+\eps)/\eps}]^{\eps/(1+\eps)} \, (\lambda_{k+1}+\cdots+\lambda_n)^{1/(1+\eps)} \label{eq:CpoincepsW_grad_eigvals} \\
		&\qquad= \CpoincepsW \, (\lambda_{k+1}+\cdots+\lambda_n)^{1/(1+\eps)}.
	\end{align}
	The step in~\eqref{eq:CpoincepsW_grad_eigvals} uses~\eqref{eq:grad_eigvals}.
	The result follows by~\eqref{eq:mse_f_fg_fX_fYZ} and~\eqref{eq:poinc_pzy}.
\end{proof}
\begin{remark}
	The previous lemma requires the gradient of~$f$ to be uniformly bounded, an assumption that is not needed in~\cite{constantine2014active} and~\cite{zahm2018certified}.
	
	However, first, applying ASM, in the sense that the matrix~$C$ from~\eqref{eq:C} is estimated by a finite Monte Carlo sum, requires the same assumption to prove results on corresponding approximations of eigenvalues~$\lambda_i$ and eigenvectors~$\w_i$; see~\cite{constantine2014computing} and~\cite[Section~3.3]{constantine2015active}.
	
	Secondly, this assumption can be weakened by applying another Hölder's inequality analogous to~\eqref{eq:hoelder_weak}.
	Indeed, for~$\eps\in(0,1)$, we would get
	\begin{align}
		&\expct[\expctpzy{\norm{\grad^\z f(\xyz{\Y,\Z})}_2^2}^{1+\eps}]^{1/(1+\eps)} \\
		&\qquad \leq \expct[\expctpzy{\norm{\grad^\z f(\xyz{\Y,\Z})}_2^2}^{1/(1-\eps)}]^{(1-\eps)/(1+\eps)} \\
		&\qquad\qquad \cdot \expct[\expctpzy{\norm{\grad^\z f(\xyz{\Y,\Z})}_2^2}]^{\eps/(1+\eps)}.
	\end{align}
	Since
	\begin{align}
		&\expct[\expctpzy{\norm{\grad^\z f(\xyz{\Y,\Z})}_2^2}^{1/(1-\eps)}] \\
		&\qquad \leq \expct[\expctpzy{\norm{\grad^\z f(\xyz{\Y,\Z})}_2^{2/(1-\eps)}}] \\
		&\qquad \leq \expct[\norm{\grad^\x f(\X)}_2^{2/(1-\eps)}],
	\end{align}
	we would only require~$\norm{\grad^\x f(\X)}_2^{2/(1-\eps)}$ to be integrable.
	What we, however, would have to accept in this case, is the resulting weaker order~$\eps/(1+\eps)$ in the inactive trace.
\end{remark}
The $L$- and $\px$-dependence of~$\CpoincepsW$ is notationally neglected in the following.
If possible, we can choose a suitable~$\eps>0$ to get $\expct[C_\Y^{(1+\eps)/\eps}]<\infty$ and thus a finite constant~$\CpoincepsW$.
Note that we lose first order in the eigenvalues from the inactive subspace, but have instead order~$1/(1+\eps)<1$.
Of course, the constant~$\CpoincepsW$ could get arbitrarily large as~$\eps\to0$, but this strongly depends on~$W$ and the moments of~$C_\Y$; see the example given in Subsection~\ref{sssec:main_results_exp2d}.

It is known by Bobkov~\cite[Eqs.~(1.3), (1.8) and p.~1906]{bobkov1999isoperimetric} that there exists a (dimensionally dependent) Poincaré constant~$C_\y$ for a general log-concave density~$\pzy(\cdot|\y)$ that is bounded from below and above by
\begin{equation}
	\label{eq:C_y_bobkov}
	\begin{split}
		&\expct[(\norm{\Z-\z_0}_2-\expct[\norm{\Z-\z_0}_2\|\Y=\y])^2\|\Y=\y] \leq C_\y \\
		&\qquad\leq K\, \expct[\norm{\Z-\z_0}_2^2\|\Y=\y] \\
		&\qquad= K \sum_{i=1}^{n-k}{\varcond{Z_i}{\Y=\y}},
	\end{split}
\end{equation}
where $\z_0\defas\expctcond{\Z}{\Y=\y}$ and $K=432$ \cite[Eqs.~(1.8) and~(3.4)]{bobkov1999isoperimetric} is a universal constant.
To the authors' knowledge, the constant~$C_\y$ is the best available.
We provide a scenario in Subsection~\ref{sssec:main_results_exp2d} (\grqq{}Rotation by~$\theta=\pi/4$") in which the lower bound viewed as a random variable has no finite essential supremum implying the same for~$C_\Y$.

However, to make use of Lemma~\ref{lem:CpoincepsW}, we need to investigate the involved constant~$\CpoincepsW(\eps,n,k)$.
\begin{lemma}
	It holds that
	\begin{equation}
		\label{eq:bound_Cy_eps}
		\expct[C_\Y^{(1+\eps)/\eps}]^{\eps/(1+\eps)} \leq K (n-k)^{1/(1+\eps)} \CvarW,
	\end{equation}
	where
	\begin{equation}
		\label{eq:CvarW}
		\CvarW = \CvarW(\eps,n,k,W) \defas \left(\sum_{i=1}^{n-k}\expct[\var(Z_i|\Y)^{(1+\eps)/\eps}] \right)^{\eps/(1+\eps)}.
	\end{equation}
\end{lemma}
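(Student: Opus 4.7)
The plan is to combine Bobkov's upper bound on $C_\y$ from~\eqref{eq:C_y_bobkov} with a standard $\ell^1$-to-$\ell^p$ norm inequality, and then take expectations. Set $p \defas (1+\eps)/\eps > 1$, so that $1/p = \eps/(1+\eps)$ and $(p-1)/p = 1/(1+\eps)$; this is the exponent bookkeeping that makes the final constants match the statement.

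First, I would apply Bobkov's bound in~\eqref{eq:C_y_bobkov} pointwise in $\y \in \Yset^+$ to obtain
\begin{equation}
C_\Y \leq K \sum_{i=1}^{n-k} \var(Z_i \| \Y)
\end{equation}
as random variables. Raising both sides to the power $p$ and using the convexity of $t \mapsto t^p$ (equivalently, the power mean inequality or Jensen applied to the uniform measure on $\{1,\dots,n-k\}$),
\begin{equation}
\left(\sum_{i=1}^{n-k} \var(Z_i\|\Y)\right)^{p} \leq (n-k)^{p-1} \sum_{i=1}^{n-k} \var(Z_i\|\Y)^{p},
\end{equation}
so that $C_\Y^{p} \leq K^{p}\,(n-k)^{p-1} \sum_{i=1}^{n-k} \var(Z_i\|\Y)^{p}$.

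Next, I would take expectations, use linearity, and then raise both sides to the $1/p$ power:
\begin{equation}
\expct[C_\Y^{p}]^{1/p} \leq K\,(n-k)^{(p-1)/p} \left(\sum_{i=1}^{n-k} \expct[\var(Z_i\|\Y)^{p}]\right)^{1/p}.
\end{equation}
Substituting back $p = (1+\eps)/\eps$ turns $(p-1)/p$ into $1/(1+\eps)$ and $1/p$ into $\eps/(1+\eps)$, matching the definition of $\CvarW$ in~\eqref{eq:CvarW} and yielding~\eqref{eq:bound_Cy_eps}.

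There is no real obstacle here beyond making sure the exponents line up with the definitions in the statement; the Bobkov inequality does all of the probabilistic work, and the remaining inequality is purely algebraic (Jensen on a finite uniform measure). The only minor subtlety is that Bobkov's bound is stated for each fixed $\y$ with $C_\y$ possibly infinite on a set of positive $\py$-measure — in that case the right-hand side of~\eqref{eq:bound_Cy_eps} is also infinite (so the bound holds trivially), and otherwise the inequality is taken in $[0,\infty]$ throughout, which is harmless since both sides are nonnegative.
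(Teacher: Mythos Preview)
Your proof is correct and essentially identical to the paper's: both apply Bobkov's upper bound~\eqref{eq:C_y_bobkov}, raise to the power $p=(1+\eps)/\eps$, use Jensen's inequality on the uniform measure over $\{1,\dots,n-k\}$ to pass from $\bigl(\sum_i a_i\bigr)^p$ to $(n-k)^{p-1}\sum_i a_i^p$, take expectations, and then take the $1/p$-th root. The only cosmetic difference is that the paper writes the Jensen step as $(n-k)^{(1+\eps)/\eps}\cdot\frac{1}{n-k}$ before simplifying, whereas you go directly to $(n-k)^{p-1}$.
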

\begin{proof}
Using Jensen's inequality for weighted sums, it follows that
\begin{align}
	\expct[C_\Y^{(1+\eps)/\eps}] &\leq K^{(1+\eps)/\eps} (n-k)^{(1+\eps)/\eps}\frac{1}{n-k} \sum_{i=1}^{n-k}\expct[\var(Z_i|\Y)^{(1+\eps)/\eps}] \\
	&= K^{(1+\eps)/\eps}(n-k)^{1/\eps}\sum_{i=1}^{n-k}\expct[\var(Z_i|\Y)^{(1+\eps)/\eps}].
\end{align}
The result follows.
\end{proof}

Eventually, we get
\begin{equation}
	\label{eq:CpoincepsW_CvarW_depend}
	\CpoincepsW(\eps,n,k) \leq L^{\eps/(1+\eps)} K (n-k)^{1/(1+\eps)} \CvarW(\eps,n,k).
\end{equation}
As before, we can remove the dependence of~$\CvarW$ on~$W=W(f)$ by considering the supremum over all orthogonal matrices.
That is, we define
\begin{equation}
	\label{eq:Cpoinceps}
	\Cpoinceps \defas \sup_{W \text{orth.}} \CpoincepsW
\end{equation}
and
\begin{equation}
	\label{eq:Cvar}
	\Cvar \defas \sup_{W \text{orth.}} \CvarW,
\end{equation}
and get
\begin{equation}
	\expct[(f(\X)-f_g(\X))^2] \leq \Cpoinceps (\lambda_{k+1}+\cdots+\lambda_n)^{1/(1+\eps)},
\end{equation}
provided the constant~$\Cpoinceps=\Cpoinceps(\eps,n,k,L,\px)$ exists.

For~$\Cvar$, we argue that it is actually enough to take the supremum only over the set of rotation matrices.
Indeed, any orthogonal matrix~$W$ is either a \textit{proper} ($\det W=1$) or an \textit{improper} ($\det W=-1$) rotation which is the combination of a proper rotation and an inversion of the axes; see, \eg~\cite{kinsey2006symmetry,morawiec2003orientations}.
However, since the constant~$\CvarW$ from~\eqref{eq:CvarW} is invariant to inversions of the axes, it holds that
\begin{equation}
	\label{eq:supW_supR}
	\sup_{W\,\text{orth.}} \CvarW = \sup_{R\,\text{rot.}} C_{\var,R}.
\end{equation}
This equality is exploited in the next subsection.

\subsection{Independently exponentially distributed random variables as an extreme case}
\label{ssec:exp_dist_case}
In this subsection, we take a closer look at independently exponentially distributed random variables in~$n\geq2$ dimensions as an example for a general log-concave distribution.
In particular, we use the lower bound of Bobkov from~\eqref{eq:C_y_bobkov} in Subsection~\ref{sssec:main_results_exp2d} to show that there exists a scenario in which the random Poincaré constant~$C_\Y$ does not have an essential supremum implying that~$\Cpoinc$ from~\eqref{eq:C_poinc} does not exist.
Therefore, the quantity~$C_{\var}$ from~\eqref{eq:Cvar} is investigated in Subsections~\ref{sssec:main_results_exp2d} and~\ref{sssec:main_results_expnd} to derive a (finite) upper bound for~$\Cpoinceps$ from~\eqref{eq:Cpoinceps} in this special case.

We regard a random vector~$\X=(X_1,\ldots,X_n)\tr$ whose components are independently exponentially distributed with unit rates $\nu_i=1$, $i=1,\ldots,n$ and will see that investigations with unit rates are sufficient to derive statements also involving other rates.
The distribution of~$\X$ has the density
\begin{equation}
	\label{eq:dens_exp_n}
	\px(\x) = \begin{cases}\exp(-x_1-\cdots-x_n) &\text{if $\x=(x_1,\ldots,x_n)\tr\in\R_{\geq0}^n$}, \\0 &\text{otherwise}.\end{cases}
\end{equation}
That is, in this case $\Xset=\R_{\geq0}^n$ and
\begin{equation}
	V(\x) = \begin{cases}x_1+\cdots+x_n &\text{if $\x=(x_1,\ldots,x_n)\tr\in\R_{\geq0}^n$}, \\+\infty &\text{otherwise}.\end{cases}
\end{equation}
Note that $V$ is convex.

Since we are interested in~$\Cvar$ as a supremum over all orthogonal matrices, we assume that, in this subsection,~$W=\begin{pmatrix}W_1&W_2\end{pmatrix}$ is an \textit{arbitrary} orthogonal matrix not depending on~$f$ and~$\px$.
Indeed, as the equality in~\eqref{eq:supW_supR} motivates, we can further assume that~$W$ is a rotation matrix.

\subsubsection{$2$ dimensions}
\label{sssec:main_results_exp2d}
The joint density of two independently exponentially distributed random variables $X_1$ and $X_2$ both with unit rate is
\begin{equation}
	\px(x_1,x_2) = \begin{cases}
		\exp(-x_1-x_2) \quad&\text{if $x_1,x_2\geq0$}, \\
		0 &\text{otherwise}.
	\end{cases}
\end{equation}
First, let us regard a rotation of the two-dimensional Cartesian coordinate system by a general angle~$\theta\in[-\pi,\pi)$ to a coordinate system for $(y,z)$, \ie
\begin{equation}
	\begin{pmatrix}x_1\\x_2\end{pmatrix} = R_\theta \begin{pmatrix}y\\z\end{pmatrix}
\end{equation}
for a rotation matrix
\begin{equation}
	W = R_\theta \defas \begin{pmatrix}\cos\theta & -\sin\theta \\ \sin\theta & \cos\theta\end{pmatrix}.
\end{equation}
That is, in two dimensions, it holds that
\begin{equation}
	\Cvar = \sup_{\theta\in[-\pi,\pi)} C_{\var,R_\theta}.
\end{equation}
Subsequently, we look at the special case $\theta=\pi/4$ as an example for an unbounded Poincaré constant~$C_y$ of $\rho_{z|y}(\cdot|y)$.
Variables are written in thin letters in this subsection since they denote real values and not multidimensional vectors.

Note that the bound from~\eqref{eq:bound_Cy_eps} in this two-dimensional setting becomes
\begin{equation}
	\expct[C_Y^{(1+\eps)/\eps}]^{\eps/(1+\eps)} \leq K \CvarW(\eps,2,1)
\end{equation}
with
\begin{equation}
	\CvarW(\eps,2,1) = \expct[\var(Z|Y)^{(1+\eps)/\eps}]^{\eps/(1+\eps)}.
\end{equation}

\subsubsection*{Rotation by general $\theta$}
Let~$\theta\in[-\pi,\pi)$.
Then, the joint density of~$(Y,Z)$ is
\begin{align}
	\rho_{Y,Z}(y,z) &= \exp(-(y\cos\theta-z\sin\theta) -(y\sin\theta+z\cos\theta)) \\
	&= \exp(-(\cos\theta+\sin\theta)y -(\cos\theta-\sin\theta)z).
\end{align}
for~$(y,z)$ with $\xyz{y,z}\in\R_{\geq0}^2$ and zero otherwise.
If we define $a_\theta^+\defas\cos\theta+\sin\theta$ and $a_\theta^-\defas\cos\theta-\sin\theta$, we have
\begin{equation}
	\label{eq:pyz_theta}
	\rho_{Y,Z}(y,z) = \begin{cases}
			\exp(-a_\theta^+y-a_\theta^-z) \quad&\text{if $\xyz{y,z}\in\R_{\geq0}^2$}, \\
			0 & \text{otherwise}.
		\end{cases}
\end{equation}
Fig.~\ref{fig:exp2d} illustrates the situation for a positive (Fig.~\ref{fig:exp2d_a}) and a negative (Fig.~\ref{fig:exp2d_b}) angle~$\theta$.
\begin{figure*}
	\centering
	\begin{subfigure}{0.45\linewidth}
		\centering
		\includegraphics[width=\linewidth]{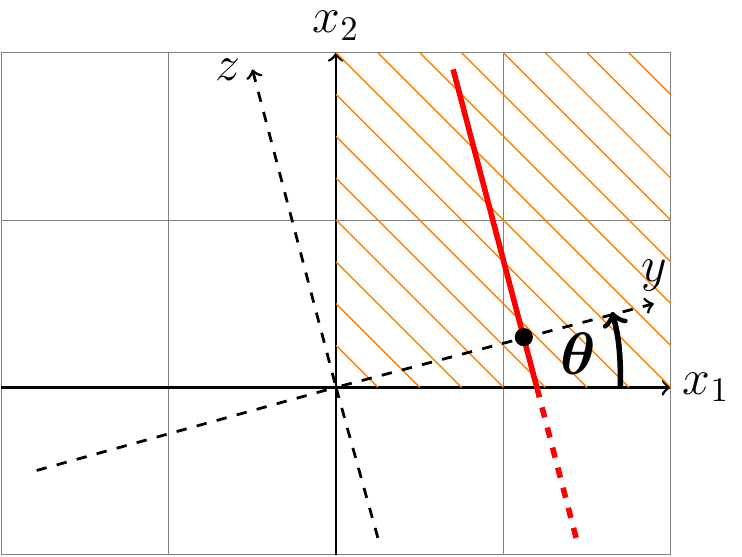}
		\caption{}
		\label{fig:exp2d_a}
	\end{subfigure}
	\hspace{0.5cm}
	\begin{subfigure}{0.45\linewidth}
		\centering
		\includegraphics[width=\linewidth]{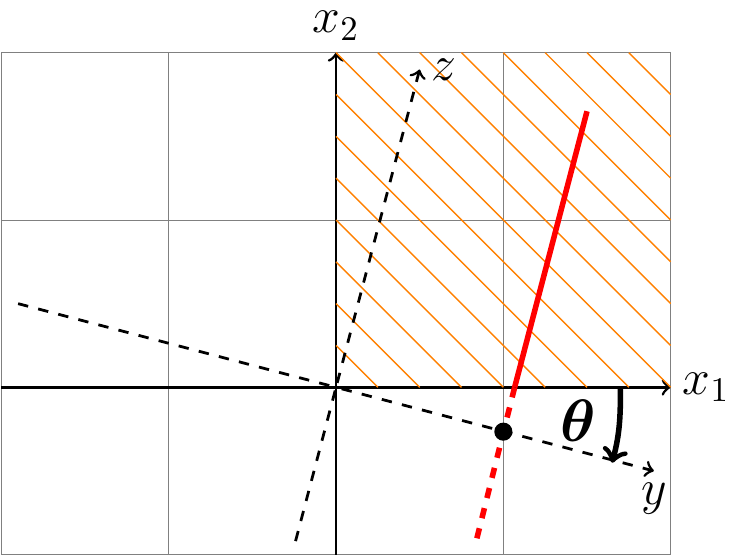}
		\caption{}
		\label{fig:exp2d_b}
	\end{subfigure}
	\caption{Rotations of the coordinate system with a positive (a) and a negative (b) angle.
		The orange lines depict contour lines in the support of~$\px$.
		The red lines show the values of $(y,z)$ for a given $y$.
		Their solid parts mark regions within the support of~$\px$, whereas the dashed parts identify values with density zero.}
	\label{fig:exp2d}
\end{figure*}

The interval of investigation for $\theta\in[-\pi,\pi)$ can be reduced by reasons of periodicity and symmetry.
First, note that the map 
\begin{equation}
	\label{eq:def_Qeps}
	Q_\eps(\theta)\defas C_{\var,R_\theta}(\eps,2,1),
\end{equation}
is $\pi$-periodic in $\theta$ since an additional rotation by~$\pi$ corresponds to changing signs of~$y$ and~$z$ which is not important for integrals in~$Q_\eps$.
Hence, it suffices to consider $\theta\in[-\pi/2,\pi/2)$.
Secondly, from Fig.~\ref{fig:exp2d} it can be deduced that~$Q_\eps$, as a map of~$\theta$, is symmetric around~$-\pi/4$ in $[-\pi/2,0]$ and symmetric around~$\pi/4$ in $[0,\pi/2)$.
This fact is also shown in Fig.~\ref{fig:theta_curves}. That is, it is enough to investigate angles~$\theta\in[-\pi/4,\pi/4]$.

\begin{figure}
	\centering
	\includegraphics[width=0.75\linewidth]{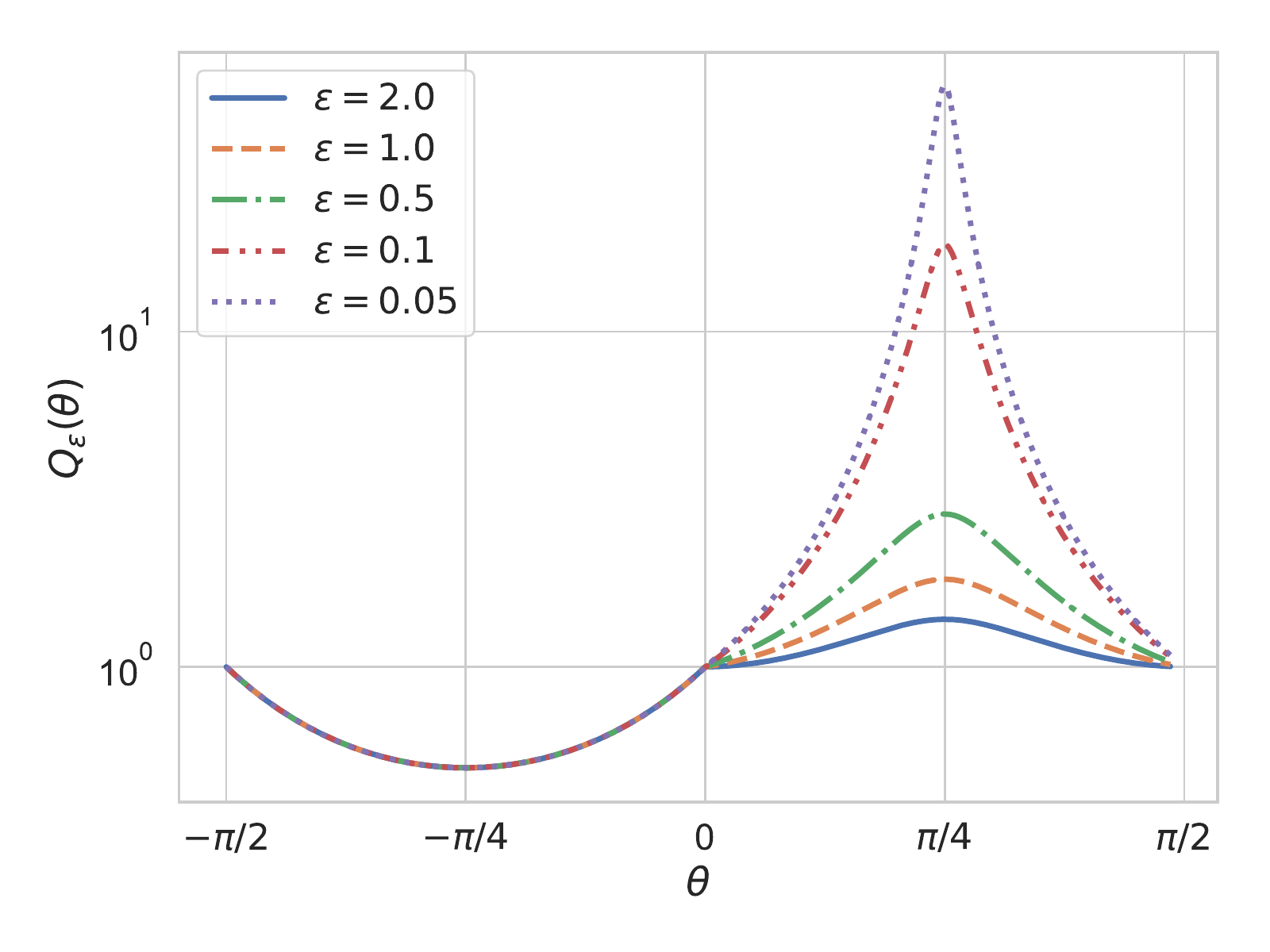}
	\caption{Illustration of symmetries in $\theta$ of the map~$Q_\eps(\theta)$ for several $\eps>0$.}
	\label{fig:theta_curves}
\end{figure}

For the computation of integrals in $Q_\eps(\theta)$, $\theta\in[-\pi/4,\pi/4]$, it is necessary, for a given $y$, to determine boundaries~$\ell_0(y)$ and $\ell_1(y)$ of intervals for $z$ that lie in the support of the joint density~$\rho_{Y,Z}(y,z)$ (see the thick solid lines in Fig.~\ref{fig:exp2d}).
The integrals in~$Q_\eps(\theta)$ are computed using the computer algebra system~\wm{}~\cite{mathematica}.
The computation requires to treat the cases~$\theta\in[-\pi/4,0)$ and~$\theta\in[0,\pi/4]$ differently (see Fig.~\ref{fig:exp2d}).

For negative $\theta\in[-\pi/4,0)$ and arbitrary $y\in\R$, we have that
\begin{equation}
	\ell_0(y) = \left.\begin{cases}
	\abs{y}\cot(\abs{\theta}) &\text{if $y<0$} \\
	y\tan(\abs{\theta}) &\text{if $y\geq0$}
	\end{cases}
	\right\rbrace = \abs{y}\tan(\abs{\theta})^{\text{sgn}(y)}
\end{equation}
and $\ell_1(y)=\infty$, \ie
\begin{equation}
	\rho_{Y,Z}(y,z) = \exp(-a_\theta^+y - a_\theta^-z) \cdot \ind_{[\ell_0(y),\ell_1(y)]}(z).
\end{equation}
We compute that
\begin{equation}
	\varcond{Z}{Y=y} = (\cos(\abs{\theta})+\sin(\abs{\theta}))^{-2}
\end{equation}
which is constant in $y$ and yields
\begin{equation}
	Q_\eps(\theta) = C_{\var,R_\theta}(\eps,2,1) = (\cos(\abs{\theta})+\sin(\abs{\theta}))^{-2}.
\end{equation}
Note that this explains the left part of the graph of $Q_\eps(\theta)$ in Fig.~\ref{fig:theta_curves} which shows that~$Q_\eps(\theta)$ does not depend on~$\eps$ for~$\theta\in[-\pi/2,0)$.

For non-negative $\theta\in[0,\pi/4]$ and a given $y\geq0$, the boundaries are computed to $\ell_0(y)=-y\tan(\theta)$ and $\ell_1(y)=y\cot(\theta)$, \ie
\begin{equation}
	\rho_{Y,Z}(y,z) = \exp(-a_\theta^+y - a_\theta^-z) \cdot \ind_{[0,\infty)}(y) \cdot \ind_{[\ell_0(y),\ell_1(y)]}(z).
\end{equation}
We compute that
\begin{align}
	&\varcond{Z}{Y=y} \nonumber \\
	&\quad= \frac{a_\theta}{8b_\theta^2} \left( \frac{1-2\exp(b_\theta y)+\exp(2b_\theta y)-8\exp(b_\theta y)y^2(1-d_\theta)}{(\exp(b_\theta y)-1)^2} - c_\theta \right)
\end{align}
for $a_\theta\defas\csc(\theta)^4\sec(\theta)^4$, $b_\theta\defas\sec(\theta)-\csc(\theta)$, $c_\theta\defas\cos(4\theta)$, and $d_\theta\defas\sin(2\theta)$.
$\varcond{Z}{Y=y}$ can actually be bounded in~$y$ for $\theta\in[0,\pi/4)$.
Indeed, since $d_\theta\in[0,1)$, it holds that $1-d_\theta\in(0,1]$ implying that $8\exp(b_\theta y)y^2(1-d_\theta)>0$.
It follows that
\begin{align}
	\var(Z|Y) &\leq \frac{a_\theta}{8b_\theta^2} \left( \frac{1-2\exp(b_\theta y)+\exp(2b_\theta y)}{(\exp(b_\theta y)-1)^2}-c_\theta \right) \\
	&= \frac{a_\theta}{8b_\theta^2} \left( \frac{(\exp(b_\theta y)-1)^2}{(\exp(b_\theta y)-1)^2}-c_\theta \right) \\
	&= \frac{a_\theta(1-c_\theta)}{8b_\theta^2}.
\end{align}
Fig.~\ref{fig:y_varzy} illustrates the boundedness of $\varcond{Z}{Y=y}$ and additionally shows that it approaches the unbounded function~$y\mapsto y^2/3$ as~$\theta\to\pi/4$.
Hence, for~$\theta\in[0,\pi/4)$, it holds that
\begin{equation}
	Q_\eps(\theta) = C_{\var,R_\theta}(\eps,2,1) \leq \frac{a_\theta(1-c_\theta)}{8b_\theta^2}.
\end{equation}
This bound is itself unbounded in~$\theta$ since~$b_\theta\to0$ and $a_\theta(1-c_\theta)\to32$ as~$\theta\to\pi/4$ implying that we can see~$\theta=\pi/4$ as a special case.
This assessment is also supported by Fig.~\ref{fig:eps_curves}.
\begin{figure*}
	\centering
	\begin{subfigure}{0.495\linewidth}
		\centering
		\includegraphics[width=\linewidth]{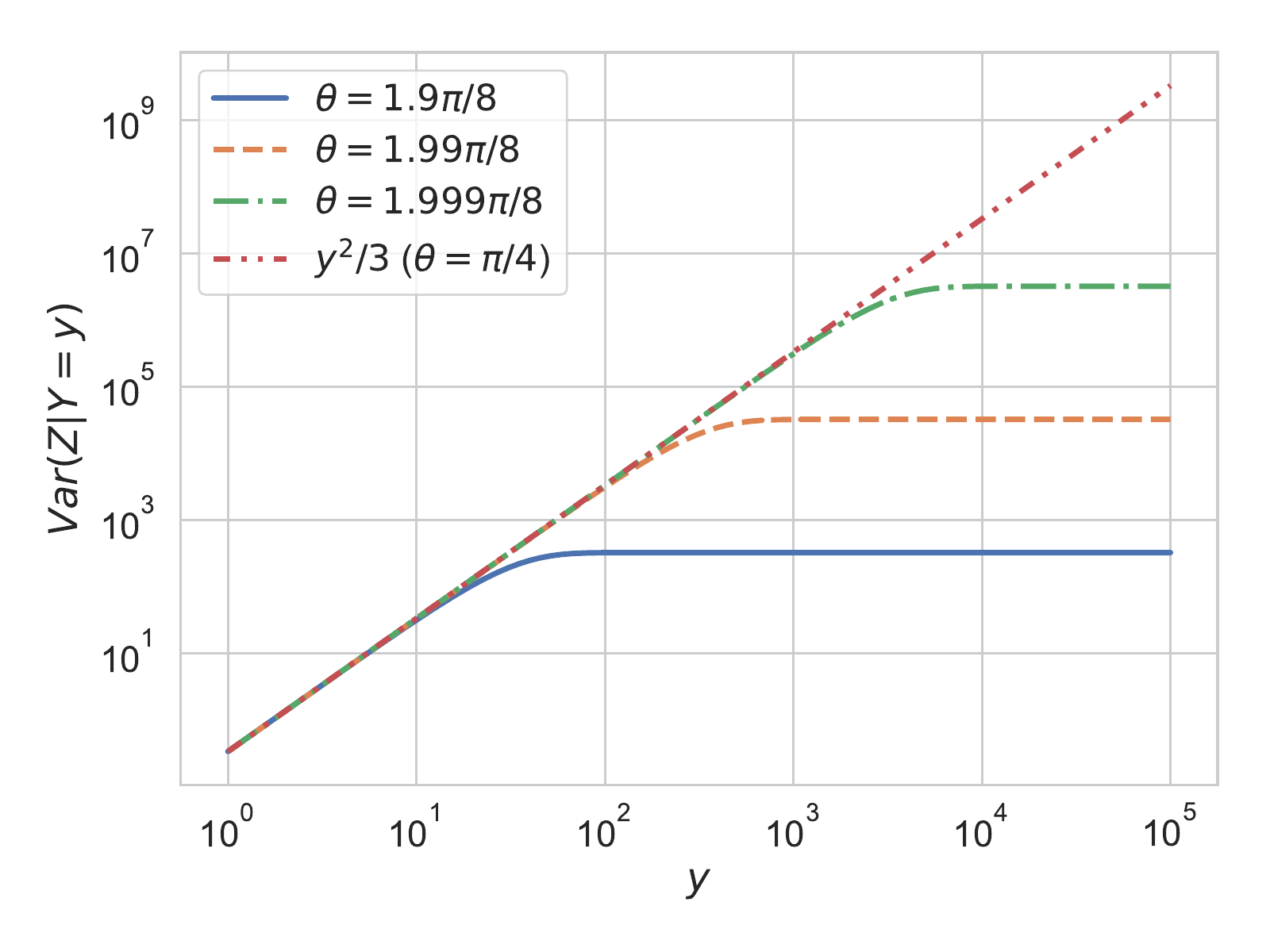}
		\caption{}
		\label{fig:y_varzy}
	\end{subfigure}
	\begin{subfigure}{0.495\linewidth}
		\centering
		\includegraphics[width=\textwidth]{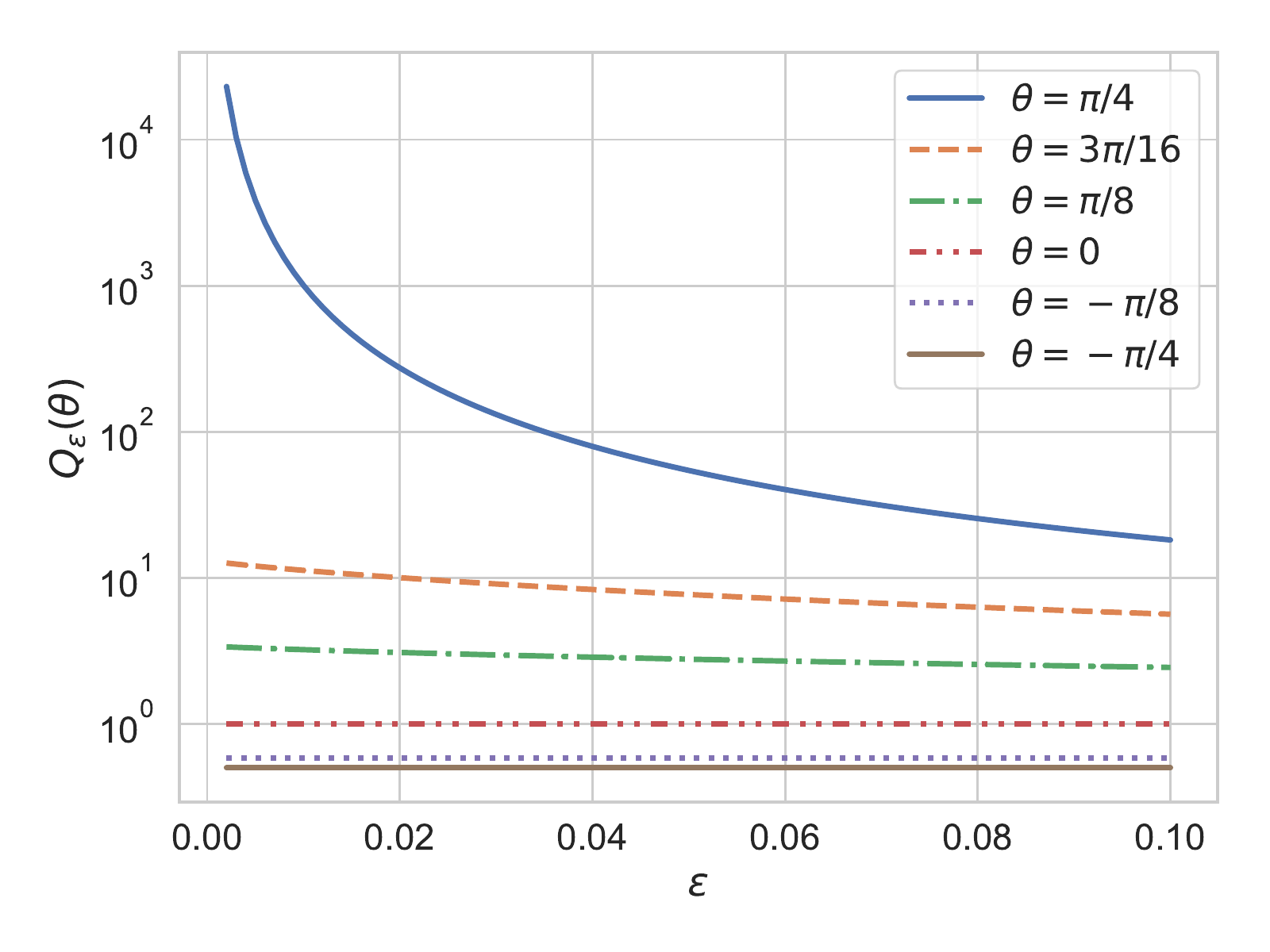}
		\caption{}
		\label{fig:eps_curves}
	\end{subfigure}
	\caption{(a) The log-log plot of the map $y\mapsto\varcond{Z}{Y=y}$ shows that it is bounded for angles~$\theta\in[0,\pi/4)$, but approaching the unbounded function~$y^2/3$, which corresponds to~$\theta=\pi/4$, as $\theta\to\pi/4$. \\
	(b) The plot shows the map~$\eps\mapsto Q_\eps(\theta)$ for several angles~$\theta$.
	Also, it illustrates the fact that $\theta=\pi/4$ is a special case for which $Q_\eps(\theta)$ can get arbitrarily large.}
\end{figure*}
In particular, note that
\begin{equation}
	\Cvar = C_{\var,R_{\pi/4}}.
\end{equation}


\subsubsection*{Rotation by $\theta=\pi/4$}
A rotation of $45^\circ$, \ie $\theta=\pi/4$ and~$W=R_{\pi/4}$, is a limit case since $a_{\pi/4}^-$ from~\eqref{eq:pyz_theta} becomes zero.
The joint density for $Y$ and $Z$ is then
\begin{equation}
	\rho_{Y,Z}(y,z) = \exp(-\sqrt{2}y) \cdot \ind_{[0,\infty)}(y) \cdot \ind_{[-y,y]}(z).
\end{equation}
A graphical illustration of this case is given in Fig.~\ref{fig:exp2d_pi4}.
Consequently, the marginal distribution of $Y$ is
\begin{equation}
	\rho_Y(y) = \int_{-\infty}^{\infty}\rho_{Y,Z}(y,z)\d{z} = 2y\exp(-\sqrt{2}y) \cdot \ind_{[0,\infty)}(y)
\end{equation}
and the conditional density $\rho_{Z|Y}(\cdot|y)$ computes to
\begin{equation}
	\rho_{Z|Y}(z|y)=\frac{\ind_{[-y,y]}(z)}{2y}
\end{equation}
for $y>0$.
Note that $\rho_{Z|Y}(\cdot|y)$ is the density of a uniform distribution on the interval~$[-y,y]$.
\begin{figure}
	\centering
	\includegraphics{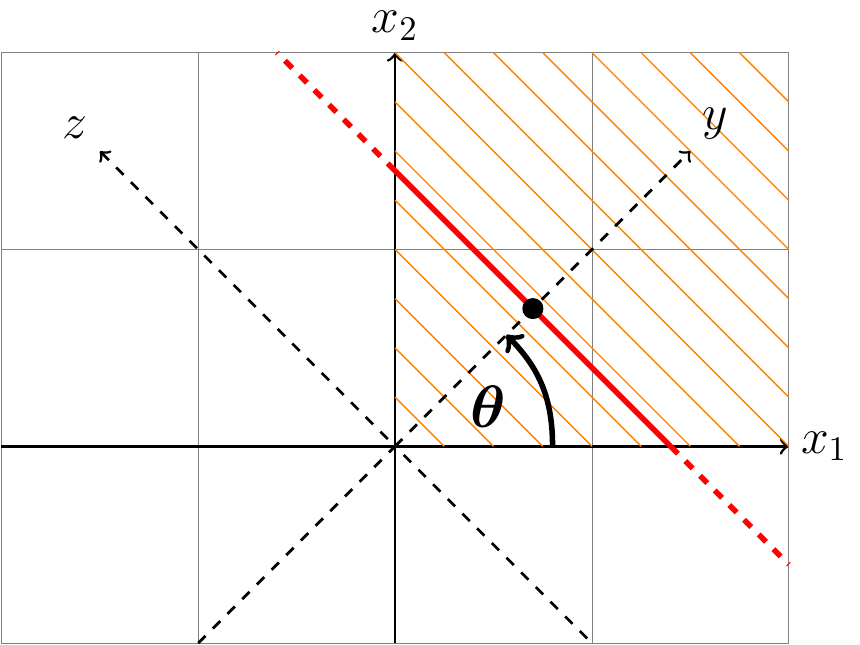}
	\caption{Exponential distribution in $2$ dimensions with a coordinate system rotated by $45^\circ$.
		The orange lines depict the contour levels of the distribution in the support of~$\px$.
		The solid red line marks the interval of the uniform distribution of $Z\|Y=y$ for $y>0$.}
	\label{fig:exp2d_pi4}
\end{figure}
For~$Y>0$, it follows that
\begin{equation}
	\label{eq:varzy_pi4}
	\var(Z|Y) = (2Y)^{-1}\int_{-Y}^{Y}{z^2\d{z}} = Y^2/3,
\end{equation}
which is the expression that variances of $Z|Y$ for other angles~$\theta^*$ approach to as $\theta^*\to\pi/4$ (see Fig.~\ref{fig:y_varzy}).

Note that the lower bound from~\eqref{eq:C_y_bobkov} for $C_Y$ in this case becomes
\begin{equation}
	\expct[(\abs{Z}-\expct[\abs{Z}\,|\,Y])^2\,|\,Y] = \var(\abs{Z}\,|\,Y) = Y^2/12,
\end{equation}
since $\abs{Z}\,|\,Y\sim\mathcal{U}([0,Y])$ and, hence, its distribution is not compactly supported implying the same for the distribution of~$C_Y$.
Therefore, we found a scenario in which the constants~$\CpoincW$ and~$\Cpoinc$ indeed do not exist.

However, there is still a chance that the constants~$\CpoincepsW$ and~$\Cpoinceps$ from~\eqref{eq:CpoincepsW} and, respectively,~\eqref{eq:Cpoinceps} exist.
It holds that
\begin{align}
	\Cvar(\eps,2,1) = C_{\var,R_{\pi/4}}(\eps,2,1) &= \frac{1}{3} \expct[Y^{2+2/\eps}]^{\eps/(1+\eps)}
\end{align}
implying that the constant~$\Cpoinceps(\eps,2,1)$ can be bounded from above by
\begin{equation}
	\Cpoinceps(\eps,2,1) \leq L^{\eps/(1+\eps)} \frac{K}{3} \expct[Y^{2+2/\eps}]^{\eps/(1+\eps)}.
\end{equation}
For example, choosing~$\eps=2$ would give
\begin{equation}
	\Cpoinceps(2,2,1) \leq 2K\left(\frac{L^2}{3}\right)^{1/3}.
\end{equation}

\subsubsection{$n$ dimensions}
\label{sssec:main_results_expnd}
This subsection aims to generalize the results of the previous subsection, \ie we investigate the constant~$\Cpoinceps$ from~\eqref{eq:Cpoinceps} for $n$ independently exponentially distributed random variables.

Motivated by the two-dimensional case, we regard the rotation of the coordinate system by a matrix~$W=R^*$ that rotates the vector $(1,0,\ldots,0)\tr\in\R^n$ to $(1/\sqrt{n},\ldots,1/\sqrt{n})\tr\in\R^n$.
Note that in the two-dimensional case, a rotation by $\theta=\pi/4$ corresponds to a matrix rotating $(1,0)\tr$ to $(1/\sqrt{2},1/\sqrt{2})\tr$.
This is the worst case in the sense that $Z_i|\Y$ is uniformly distributed for each component $Z_i$ in $\Z=(Z_1,\ldots,Z_{n-k})\tr$ and hence, similar to the two-dimensional case, the conditional variance of~$Z_i|\Y$ has no finite essential supremum.
In the context from above, it holds that
\begin{equation}
	\Cvar(\eps,n,k) = C_{\var,R^*}(\eps,n,k).
\end{equation}
The following theorem studies this case and investigates the dimensional dependence of the involved constant.

\begin{theorem}
	For~$\px$ as in~\eqref{eq:dens_exp_n}, it holds that
	\begin{equation}
		\expct[(f(\X)-f_g(\X))^2] \leq C_{\exp^n} \, (\lambda_{k+1}+\cdots+\lambda_n)^{1/(1+\eps)}
	\end{equation}
	for a constant
	\begin{equation}
	C_{\exp^n}=C_{\exp^n}(\eps,n,k,L,\px) \geq \Cpoinceps
	\end{equation}
\end{theorem}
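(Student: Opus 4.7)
The plan is to reduce the theorem to the general bound
\begin{equation}
\Cpoinceps \leq L^{\eps/(1+\eps)} K (n-k)^{1/(1+\eps)} \Cvar,
\end{equation}
which follows immediately from Lemma~\ref{lem:CpoincepsW} together with~\eqref{eq:CpoincepsW_CvarW_depend},~\eqref{eq:Cpoinceps} and~\eqref{eq:Cvar}, and then to exhibit an explicit finite upper bound on $\Cvar$ specific to the product exponential density~\eqref{eq:dens_exp_n}. Replacing $\Cvar$ above by that explicit bound yields the desired constant $C_{\exp^n}$.

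By~\eqref{eq:supW_supR}, the supremum in the definition of $\Cvar$ may be restricted to rotations, and, as motivated in the paragraph preceding the theorem, it is enough to evaluate $\CvarW$ at the worst-case rotation $R^\star$ that sends $(1,0,\ldots,0)\tr$ to $(1,\ldots,1)\tr/\sqrt{n}$. For $W=R^\star$ the first column of $W$ is $\mathbf{1}/\sqrt{n}$, so orthogonality forces the remaining columns to be orthogonal to $\mathbf{1}$; consequently $\sum_{i=1}^n x_i = \sqrt{n}\,y_1$ on $\Xset$, and
\begin{equation}
\pyz(\y,\z) = \exp(-\sqrt{n}\,y_1)\cdot\ind_{\inter{\Zset_\y}}(\z),
\end{equation}
with $\inter{\Zset_\y} \defas \{\z \in \R^{n-k} : \xyzW{\y,\z}{R^\star} \in \R_{\geq 0}^n\}$. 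In particular, $\Z \mid \Y=\y$ is \emph{uniform} on the convex polytope $\inter{\Zset_\y}$, which is the natural $n$-dimensional analogue of the $\theta=\pi/4$ scenario in Subsection~\ref{sssec:main_results_exp2d}.

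Next, I would bound $\varcond{Z_i}{\Y=\y}$ by the squared diameter of $\inter{\Zset_\y}$. Since $R^\star$ is an isometry, $\inter{\Zset_\y}$ is congruent to $\{\x \in \R_{\geq 0}^n : W_1\tr\x = \y\}$, which is contained in the simplex $\{\x \geq \vec{0} : \sum_i x_i = \sqrt{n}\,y_1\}$ of diameter $\sqrt{2n}\,y_1$. Hence $\varcond{Z_i}{\Y=\y} \leq 2n\,y_1^2$ and, substituting into~\eqref{eq:CvarW}, the task reduces to controlling the $(2(1+\eps)/\eps)$-th absolute moment of $Y_1$. Since $Y_1 = (X_1+\cdots+X_n)/\sqrt{n}$ is Erlang with shape $n$ and rate $\sqrt{n}$, its $p$-th moment equals $n^{-p/2}\,\Gamma(n+p)/\Gamma(n)$, which is finite for every $p>0$. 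Collecting factors yields a concrete $C_{\exp^n}(\eps,n,k,L,\px) \geq \Cpoinceps$.

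The main obstacle is the diameter estimate for $\inter{\Zset_\y}$. The crude bound $\sqrt{2n}\,y_1$ is essentially immediate from the identity $\sum_i x_i = \sqrt{n}\,y_1$, but a dimensionally sharp estimate would require a finer analysis of which orthant faces are active for a given $\y$; this appears unnecessary for mere finiteness of $C_{\exp^n}$ but would tighten its dependence on $n$ and $k$.
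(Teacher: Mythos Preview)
Your argument is correct and follows the same overall strategy as the paper: accept the identification $\Cvar=C_{\var,R^\star}$ stated just before the theorem, show that under $R^\star$ the conditional $\Z\mid\Y$ is uniform on a slice of the simplex $\{\x\geq\vec{0}:\sum_i x_i=\sqrt{n}\,y_1\}$, and then control $\expct[\varcond{Z_i}{\Y}^{(1+\eps)/\eps}]$ via moments of $Y_1$.

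The differences are in the two bounding steps. The paper does not pass through the diameter: it applies Jensen in the form $\expct[\varcond{Z_i}{\Y}^{(1+\eps)/\eps}]\leq\expct[Z_i^{2(1+\eps)/\eps}]$, then bounds $Z_i$ by the \emph{height} $h_i(y_1)=\sqrt{n(k+i)/(k+i-1)}\,y_1$ of the relevant sub-simplex and integrates explicitly against $\exp(-\sqrt{n}y_1)$ times the simplex volume $\sqrt{n}^n y_1^{n-1}/(n-1)!$. You instead bound $\varcond{Z_i}{\Y}\leq 2n\,Y_1^2$ via the full diameter $\sqrt{2n}\,y_1$ and then recognise $Y_1=(X_1+\cdots+X_n)/\sqrt{n}$ as Gamma$(n,\sqrt{n})$, reading off its moments directly. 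Your route is shorter and avoids the explicit simplex geometry; the paper's height-based bound retains the $i$- and $k$-dependence through the factor $(k+i)/(k+i-1)$, giving a slightly sharper constant $C_\eps(n,k)$ in~\eqref{eq:C_eps}. Since $h_i(y_1)\leq\sqrt{2n}\,y_1$, your final constant is at most a fixed multiple of the paper's, which is exactly the trade-off you flag in your last paragraph.
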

\begin{proof}
In the support of $\px$, \ie in $\Xset=\R_{\geq0}^n$, $\px$ is greater than zero and constant on the intersection of $\R_{\geq0}^n$ and planes
\begin{equation}
	P_a\defas\set{\x}{x_1+\cdots+x_n=a} = \set{\x}{(1,\ldots,1)\tr\x=a} \subset \R^n, \quad a>0,
\end{equation}
i.e., on hypersurfaces $T_a\defas P_a\cap\R_{\geq0}^n$.
The situation is illustrated by Fig.~\ref{fig:expnd} for $n=3$ dimensions.
\begin{figure}
	\centering
	\includegraphics[width=0.45\linewidth]{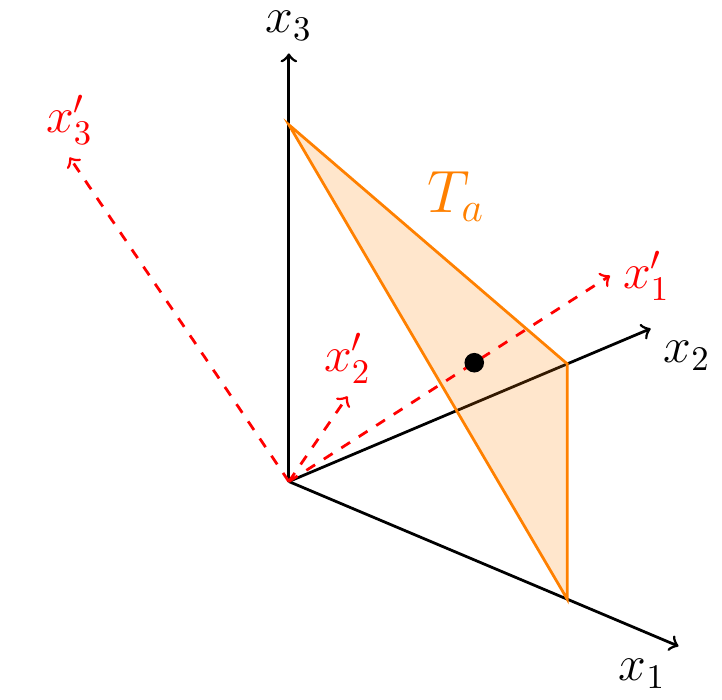}
	\caption{Exponential distribution in $3$D with a rotated coordinate system.}
	\label{fig:expnd}
\end{figure}

For $\x=\xyz{\y,\z}\in\R_{\geq0}^n$, the value of $\pyz(\y,\z)$ is only determined by~$y_1\geq0$.
Reversely, if $y_1<0$, then $\pyz(\y,\z)=0$.
We know that the point at $\x_0\defas\beta(1,\ldots,1)\tr\in\R^n$ with $\norm{\x_0}_2=y_1$ is supposed to lie on $P_a$ for some~$\beta>0$.
It follows immediately that $\beta=y_1/\sqrt{n}$.
Also, we determine~$a$ with
\begin{equation}
	a = (1,\ldots,1)\tr\x_0 = \frac{y_1}{\sqrt{n}}n = \sqrt{n}y_1.
\end{equation}
Let us define $T(y_1)\defas T_{\sqrt{n}y_1}$.
That is,
\begin{equation}
	\pyz(\y,\z) = \exp(-\sqrt{n}y_1) \cdot \ind_{[0,\infty)}(y_1) \cdot \ind_{T(y_1)}(\y,\z).
\end{equation}
$T(y_1)$, as a geometric figure, is a \textit{regular $(n-1)$-simplex} in $n$ dimensions.
$T(y_1)$ is intrinsically $(n-1)$-dimensional and has $n$~corners which are
\begin{equation}
	(\sqrt{n}y_1,0,\ldots,0),\ldots,(0,\ldots,0,\sqrt{n}y_1)\in\R^n.
\end{equation}
It follows that the side length of $T(y_1)$ is $\sqrt{2n}y_1$.
Note that the coordinates $\ytl=(y_2,\ldots,y_k)\tr$ and $\z=(z_1,\ldots,z_{n-k})\tr$ all move on $T(y_1)$.

We can rewrite $T(y_1)$ as
\begin{align}
	T(y_1) &= \set{\x\in\R_{\geq0}^n}{(W\tr\x)_1=y_1} \\
	&= \set{\xyz{\tilde{\y},\tilde{\z}}}{\xyz{\tilde{\y},\tilde{\z}}\in\R_{\geq0}^n,\, \tilde{y}_1=y_1}.
\end{align}
This motivates to view $T(y_1)$ as an $(n-1)$-dimensional set in the rotated coordinate system, i.e., we define
\begin{equation}
	\check{T}(y_1) \defas \set{(\check{\y},\z)\in\R^{k-1}\times\R^{n-k}}{\xyz{(y_1,\check{\y}),\z}\in T(y_1)} \subset \R^{n-1}.
\end{equation}

We observe that the conditioned random variable~$(\check{\Y},\Z)|Y_1$ is uniformly distributed on the regular $(n-1)$-simplex $\check{T}(y_1)$.
The basic idea to get a bound for $\expct[\var(Z_i|\Y)^{(1+\eps)/\eps}]$ is based on the fact that~$z_i$, moving as the $(k+i-1)$-th coordinate inside $\check{T}(y_1)$, takes values in~$[0,h_i(y_1)]$, where $h_i(y_1)$ is the height of a regular $(k+i-1)$-simplex with side length~$\sqrt{2n}y_1$ and is thus bounded.
In general, the height of a regular $n$-simplex is the distance of a vertex to the circumcentre of its opposite regular $(n-1)$-simplex.
By~\cite[p.~367]{buchholz1992perfect}, it holds that
\begin{equation}
	h_i(y_1) = \sqrt{\frac{n(k+i)}{k+i-1}} y_1.
\end{equation}

We start the computation by noting that
\begin{equation}
	\pyz(\y,\z) = \exp(-\sqrt{n}y_1) \cdot \ind_{[0,\infty)}(y_1) \cdot \ind_{\check{T}(y_1)}(\ytl,\z).
\end{equation}
The marginal distribution of $Z_i|\Y$ is given by
\begin{equation}
	\rho_{Z_i|\Y}(z_i|\y) = \int\cdots\int{\pzy(\z|\y)\d{z_1}\ldots\d{z_{i-1}}\d{z_{i+1}}\ldots\d{z_{n-k}}}.
\end{equation}
and so we get
\begin{align}
	&\rho_{\Y}(\y)\rho_{Z_i|\Y}(z_i|\y) \\
	&\qquad =\int\cdots\int{\pyz(\y,\z)\d{z_1}\ldots\d{z_{i-1}}\d{z_{i+1}}\ldots\d{z_{n-k}}} \\
	&\qquad= \exp(-\sqrt{n}y_1) \cdot \ind_{[0,\infty)}(y_1) \\
	&\qquad\qquad\cdot \int\cdots\int{\ind_{T(y_1)}(\ytl,\z)\d{z_1}\ldots\d{z_{i-1}}\d{z_{i+1}}\ldots\d{z_{n-k}}}.
\end{align}
Using Jensen's inequality in a first step, we can continue with
\begin{align}
	&\expct[\var(Z_i|\Y)^{(1+\eps)/\eps}] \leq \expct[\expct[Z_i^{2(1+\eps)/\eps}\|\Y]] \\
	&\qquad= \int \left( \int z_i^{2(1+\eps)/\eps} \, \rho_{Z_i|\Y}(z_i|\y) \d{z_i} \right) \rho_{\y}(\y) \dy \\
	&\qquad= \int_0^\infty{\exp(-\sqrt{n}y_1) \left( \int\int z_i^{2(1+\eps)/\eps}\cdot\ind_{\check{T}(y_1)}(\ytl,\z) \dz\d{\ytl}\right) \d{y_1}} \\
	&\qquad\leq \int_0^\infty{\exp(-\sqrt{n}y_1) \; h_i(y_1)^{2(1+\eps)/\eps} \left( \int\int \ind_{\check{T}(y_1)}(\ytl,\z) \dz\d{\ytl}\right) \d{y_1}} \\
	&\qquad= \int_0^\infty{\exp(-\sqrt{n}y_1) \; \left(\sqrt{\frac{n(k+i)}{k+i-1}} y_1\right)^{2(1+\eps)/\eps} \frac{\sqrt{n^n}}{(n-1)!} y_1^{n-1}} \d{y_1}  \\
	&\qquad= \left(\frac{n(k+i)}{k+i-1}\right)^{(1+\eps)/\eps} \frac{\sqrt{n^n}}{(n-1)!} \int_0^\infty{y_1^{n+1+2/\eps} \exp(-\sqrt{n}y_1) \d{y_1}} \\
	&\qquad= \left(\frac{n(k+i)}{k+i-1}\right)^{(1+\eps)/\eps} \frac{\sqrt{n^n}}{(n-1)!} \frac{\Gamma(n+2+2/\eps)}{n^{(1+\eps)/\eps}\sqrt{n^n}} \\
	&\qquad= \left(\frac{k+i}{k+i-1}\right)^{(1+\eps)/\eps} \frac{\Gamma(n+2+2/\eps)}{(n-1)!}.
\end{align}
Note that an intermediate step of the previous calculation uses the fact that the volume of the regular $(n-1)$-simplex $\check{T}(y_1)$ with side length~$\sqrt{2n}y_1$ is (see~\cite[p.~367]{buchholz1992perfect})
\begin{equation}
	\int\int \ind_{\check{T}(y_1)}(\ytl,\z) \dz \d{\ytl} = \frac{\sqrt{n}^n}{(n-1)!} y_1^{n-1}.
\end{equation}

Remember from~\eqref{eq:bound_Cy_eps} and~\eqref{eq:CvarW} that
\begin{equation}
	\expct[C_\Y^{(1+\eps)/\eps}]^{\eps/(1+\eps)} \leq K (n-k)^{1/(1+\eps)} C_{\var}(\eps,n,k)
\end{equation}
with
\begin{align}
	C_\var(\eps,n,k) &= \left(\sum_{i=1}^{n-k}\expct[\var(Z_i|\Y)^{(1+\eps)/\eps}] \right)^{\eps/(1+\eps)} \\
	&\leq \left(\frac{\Gamma(n+2+2/\eps)}{(n-1)!} \; \sum_{i=1}^{n-k}\left(\frac{k+i}{k+i-1}\right)^{(1+\eps)/\eps} \right)^{\eps/(1+\eps)}.
\end{align}
Defining
\begin{equation}
	\label{eq:C_eps}
	C_\eps(n,k) \defas (n-k)^{1/(1+\eps)} \left(\frac{\Gamma(n+2+2/\eps)}{(n-1)!} \; \sum_{i=1}^{n-k}\left(\frac{k+i}{k+i-1}\right)^{(1+\eps)/\eps} \right)^{\eps/(1+\eps)}
\end{equation}
then yields
\begin{equation}
	\expct[C_\Y^{(1+\eps)/\eps}]^{\eps/(1+\eps)} \leq K C_\eps(n,k).
\end{equation}

Combining all bounds, we get that
\begin{equation}
	\label{eq:C_expn}
	\Cpoinceps(\eps,n,k) \leq K \cdot L^{\eps/(1+\eps)} \cdot C_\eps(n,k) \asdef C_{\exp^n}(\eps,n,k,L),
\end{equation}
where $\Cpoinceps(\eps,n,k)$ was defined in~\eqref{eq:Cpoinceps}.
We recall that $n$ denotes the dimension of the problem, $k$ the dimension of the active subspace, $L$ is the upper bound on~$\norm{\grad f}_2^2$, and~$K$ the universal constant from~\eqref{eq:bound_Cy_eps}.

The result follows by Lemma~\ref{lem:CpoincepsW}.
\end{proof}

Fig.~\ref{fig:bound_n_eps} depicts the quantity $C_\eps(n,k=1)$ from~\eqref{eq:C_eps} as a function of~$\eps>0$ for some $n\in\N$ (left plot) and as a function of $n\geq2$ for several $\eps>0$ (right plot).
We set $k=1$ since this gives the maximum value for $C_\eps$ over all~$k\geq1$.
\begin{figure}
	\centering
	\includegraphics[width=\linewidth]{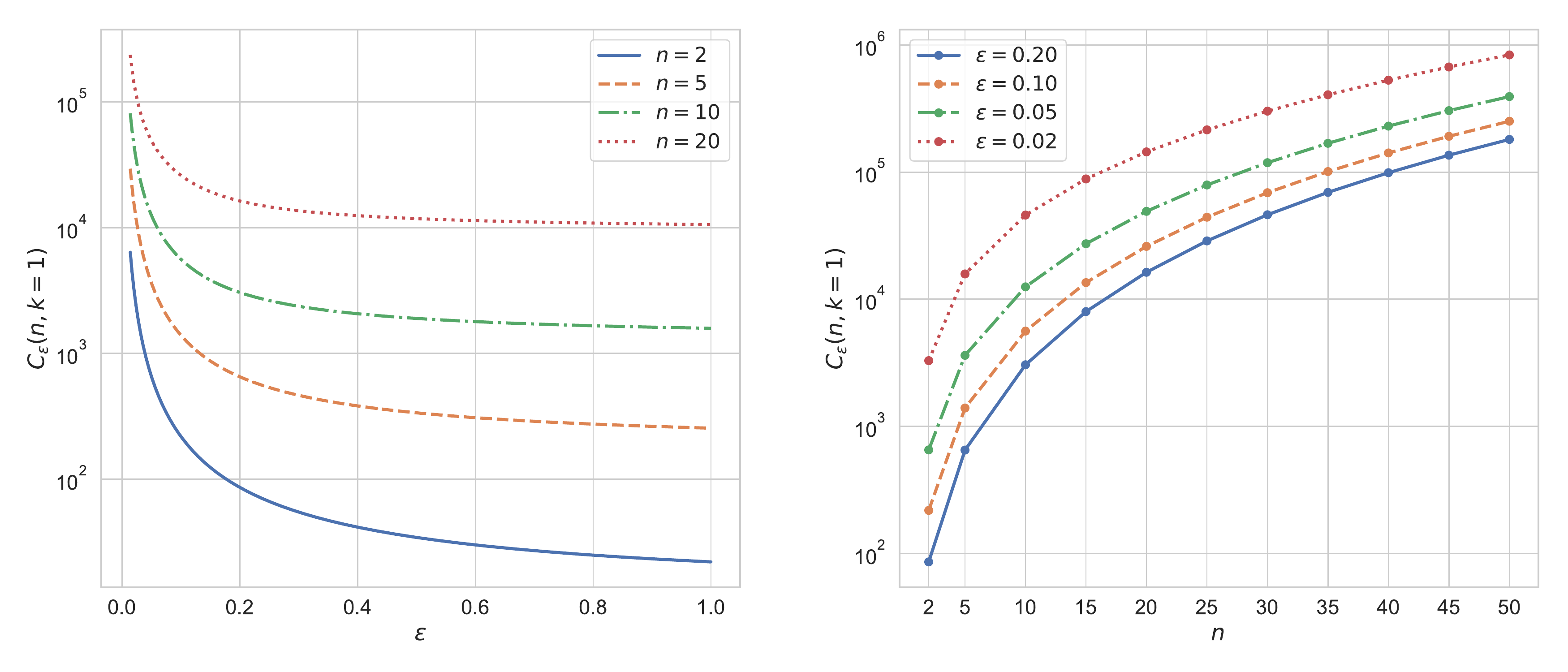}
	\caption{The left plot shows curves of the map $\eps\mapsto C_\eps(n,k=1)$ for $n\in\{ 2,5,10,20 \}$.
		Curves of $n\mapsto C_\eps(n,k=1)$ for $\eps\in\{ 0.02,0.05,0.1,0.2 \}$ are displayed on the right.}
	\label{fig:bound_n_eps}
\end{figure}
As expected, the curves increase quickly as $\eps$ approaches zero or, respectively,~$n$ becomes large.

\begin{remark}
	In the previous theorem, the exponentially distributed random variables are assumed to have unit rates.
	The computations can also be made for arbitrary rates $\nu_i$, $i=1,\ldots,n$.
	However, some modifications are necessary.
	Let $\vec{\nu}=(\nu_1,\ldots,\nu_n)\tr$ denote the vector of rates.
	To get again the worst case scenario as in the previous subsection (uniform distribution on a simplex structure), the coordinate system has to be rotated in such a way that the vector $(1,0,\ldots,0)\tr$ rotates to $\vec{\nu}/\norm{\vec{\nu}}_2$.
	The structure of a regular simplex that is used in the estimates above is not present in this more general case.
	Instead, we get a general simplex whose heights are not as easy to compute as in the regular case.
	However, rough estimates can be achieved by enclosing the general simplex with a larger regular one.
\end{remark}

\section{Future work with MGH distributions}
\label{sec:futu_work}
The generalized bound from Lemma~\ref{lem:CpoincepsW} and the study of corresponding Poincaré type constants~$\CpoincepsW$ and~$\Cpoinceps$ for independently exponentially distributed random variables in Subsection~\ref{ssec:exp_dist_case} motivate further similar investigations of more general distributions.
From a statistical perspective, a study of the class of \textit{multivariate generalized hyperbolic distributions} (MGH) (see \eg~\cite{barndorff1977exponentially}) can be considered as a next step since it allows for distributions with both non-zero skewness and heavier tails.
An MGH is a distribution of the random vector
\begin{equation}
	\label{eq:mgh}
	\X = \mu + \beta A + \sqrt{A}M\V
\end{equation}
with location parameter~$\mu\in\R^n$, skewness parameter~$\beta\in\R^n$, and a symmetric positive definite matrix~$M\in\R^{n\times n}$.
The scalar random variable~$A$, called the mixing variable, follows a generalized inverse Gaussian distribution (GIG)~\cite{jorgensen2012statistical}, and~$\V\sim\mathcal{N}(0,I)$ is independent of~$A$.
As a particular example, for~$\X$ to be Laplace distributed, we set~$\beta=0$ and let~$A$ be exponentially distributed~\cite{kotz2012laplace}.
Note that, however, the example from Subsection~\ref{ssec:exp_dist_case}, assuming independently exponentially distributed random variables, is not an MGH.
In order to include this case, we would need to introduce a mixing random matrix as scaling for~$\V$.

Nevertheless, MGH is a large class containing classical distributions like the normal-inverse Gaussian, generalized Laplace, and Student's t-distribution.
In particular, these distributions are interesting since they have been used in areas like, for instance, economics and financial markets~\cite{barndorff1997normal,barndorff1997processes,eberlein2001application}, spatial and Geostatistics~\cite{bolin2014spatial,bolin2019multivariate,wallin2015geostatistical}, and linear mixed-effects~\cite{asar2018linear,lin2007bayesian,zhang2009robust} which are used, \eg for linear non-Gaussian time series models in medical longitudinal studies~\cite{asar2018linear}.

We mention that, under an assumption on a parameter, MGH distributions are log-concave \cite{yu2017normal}, \ie we can use the estimates on Poincaré constants~$C_\Y$ of Bobkov from~\eqref{eq:C_y_bobkov}.

In our opinion, it is preferable to start the investigation with the subclass of symmetric MGH distributions, \ie~$\beta=0$ in~\eqref{eq:mgh}.
The following lines demonstrate particular difficulties that we already encounter in this smaller subclass.
Let us choose~$\mu=\beta=0$ and~$M=I$ in~\eqref{eq:mgh} such that
\begin{equation}
	\X = \sqrt{A}\V
\end{equation}
with~$\V\sim\mathcal{N}(0,I)$.
A common first step is to study~$\X$ conditioned on~$A$, \ie $\X|A\sim\mathcal{N}(0,AI)$, and to use the tower property of conditional expectations.
That is, analogously to~\eqref{eq:C}, we define
\begin{equation}
	C \defas \expct[C_A] = W\Lambda W\tr
\end{equation}
with
\begin{equation}
	C_A \defas \expctcond{\grad f(\X)\grad f(\X)\tr}{A} = W_A\Lambda_AW_A\tr.
\end{equation}
Choosing~$k\leq n-1$ independent of~$A$, we further set
\begin{equation}
	\Y_A \defas W_{A,1}\tr\X \quad\text{and}\quad \Z_A \defas W_{A,2}\tr\X.
\end{equation}
The computation starts, similar to~\eqref{eq:mse_f_fg_fX_fYZ}, with
\begin{align}
	&\expctcond{(f(\X)-f_g(\X))^2}{A} \\
	&\qquad= \expctcond{\expctcond{(f(\xyzW{\Y_A,\Z_A}{W_A})-g(\Y_A))^2}{\Y_A}}{A} \\
	&\qquad\leq \expctcond{C_{\Y_A} \expctcond{\norm{\grad^{\z_A} f(\xyzW{\Y_A,\Z_A}{W_A})}_2^2}{\Y_A}}{A} \\
	&\qquad= A \; \expctcond{\expctcond{\norm{\grad^{\z_A} f(\xyzW{\Y_A,\Z_A}{W_A})}_2^2}{\Y_A}}{A} \label{eq:C_YA_mgh} \\
	&\qquad= A \; \trace{\Lambda_{A,2}}. \label{eq:trace_L2_mgh}
\end{align}
In~\eqref{eq:C_YA_mgh}, we use the fact that the Poincaré constant of a normal distribution~$\mathcal{N}(0,AI)$ is~$\lambda_{\textnormal{max}}(AI)=A$; see Section~\ref{sec:comp_norm_dists}.
The last step to~\eqref{eq:trace_L2_mgh} is equal to~\eqref{eq:grad_eigvals}.
This yields
\begin{align}
	\expct[(f(\X)-f_g(\X))^2] &= \expct[\expctcond{(f(\X)-f_g(\X))^2}{A}] \\
	&\leq \expct[A\cdot\trace{\Lambda_{A,2}}],
\end{align}
where~the random variable~$A\cdot\trace{\Lambda_{A,2}}$ is assumed to have finite first moment.

At this point, as long as~$A$ is not compactly supported, we can only continue by applying another Hölder's inequality similar to the proof of Lemma~\ref{lem:CpoincepsW}.
However, in any case, we have to face the problem that~$\expct[\trace{\Lambda_{A,2}}]$ is, in general, not equal to~$\trace{\Lambda_2}$ which denotes the inactive trace of~$C$ that we actually aim for.
Nevertheless, we know that
\begin{equation}
	\expct[\trace{\Lambda_A}] = \trace{\Lambda},
\end{equation}
but it is unclear whether, and how, this equality can be exploited for our purposes.

\section{Summary}
\label{sec:summ}
This manuscript discusses bounds for the mean squared error of a given function of interest and a low-dimensional approximation of it which is found by the active subspace method.
These bounds, consisting of the product of a Poincaré constant and a sum of eigenvalues belonging to a non-dominant subspace, are based on a probabilistic Poincaré inequality.
Existing literature applies this Poincaré inequality with indirect non-explicit assumptions that, as a consequence, limit the class of distributions applicable for the active subspace method.
For example, these assumptions exclude distributions with exponential tails as, \eg exponential distributions.
In this respect, the main results of this manuscript give details on the problem that arises when applying the active subspace method with log-concave distributions (which include exponential distributions).
We are able to provide a scenario, involving independently exponentially distributed random variables, in which the usual estimates are not achievable due to an unbounded Poincaré constant.
However, using Hölder's inequality with conjugates $(p,q)$ ($p,q\in(1,\infty)$) instead of $(\infty,1)$,  we show that it is possible to derive a generalized result in a way that enables to balance the size of the Poincaré constant and the remaining order of the error.
We exemplify this trade-off on the mentioned scenario and show that the size of the involved constant is very much depending on the dimension of the problem.
Finally, we propose directions for future work related to the applicability of active subspaces to the large class of multivariate generalized hyperbolic distributions.
Also, details are provided for particular difficulties that already arise with a smaller subclass of these.

\section*{Source code}
Wolfram Mathematica notebooks and code for generating the plots in this manuscript are available in a repository at
\begin{center}
	\url{https://bitbucket.org/m-parente/asm-poincare-pub/}.
\end{center}

\section*{Acknowledgments}
The authors would like to thank Olivier Zahm (INRIA) for pointing out the incorrect application of the Poincaré inequality in~\cite{constantine2014active}.
Also, the helpful and supportive discussions with Krzysztof Podgórski (Lund University) are very much appreciated.

\bibliographystyle{abbrv}

\end{document}